\theoremstyle{plain}
\newtheorem{theorem}{Theorem}[section]
\newtheorem{corollary}[theorem]{Corollary}
\newtheorem{prop}[theorem]{Proposition}
\newtheorem{lemma}[theorem]{Lemma}
\theoremstyle{definition}
\newtheorem{remark}[theorem]{Remark}
\newtheorem{example}[theorem]{Example}
\newcommand{\C}{\mathbb{C}}
\newcommand{\R}{\mathbb{R}}
\newcommand{\N}{\mathbb{N}}
\newcommand{\Pol}[2][k]{\ensuremath{\mathcal{P}\left(^{#1}#2;#2\right)}}
\newcommand{\eps}{\varepsilon}
\DeclareMathOperator{\e}{e}
\newcounter{equi1}
\renewcommand{\leq}{\leqslant}
\renewcommand{\geq}{\geqslant}
\begin{document}
\begin{center}
\texttt{[Linear Algebra Appl.\ (2009),
doi:10.1016/j.laa.2008.12.020]}
\end{center}

\title[Spaces with Polynomial numerical index zero]
{Two-dimensional Banach spaces with Polynomial numerical index zero}

\author[D.~Garc\'{\i}a]{Domingo Garc\'{\i}a}
\author[B.~C.~Grecu]{Bogdan C.\ Grecu}
\address[Grecu]{Department of Pure Mathematics, Queen's University Belfast,
BT7 1NN, United Kingdom} \email{b.grecu@qub.ac.uk}

\author[M.~Maestre]{Manuel Maestre}
\address[Garc\'{\i}a \& Maestre]{Departamento de An\'{a}lisis Matem\'{a}tico,
Universidad de Valencia, Doctor Moliner $50$, $46100$ Burjasot
(Valencia), Spain} \email{domingo.garcia@uv.es,
manuel.maestre@uv.es}

\author[M.~Mart\'{\i}n]{Miguel Mart\'{\i}n}
\author[J.~Mer\'{\i}]{Javier Mer\'{\i}}

\address[Mart\'{\i}n \& Mer\'{\i}]{Departamento de An\'{a}lisis Matem\'{a}tico, Facultad de Ciencias,
Universidad de Granada, 18071 Granada, Spain}
\email{mmartins@ugr.es, jmeri@ugr.es}

\keywords{Polynomial, Banach space, numerical range, polynomial
numerical index}

\subjclass[2000]{Primary 46B04; Secondary 46B20, 46G25, 47A12}

\thanks{The first and third authors were
supported by Spanish MICINN Project MTM2008-03211/MTM. This article was started while the second
author was a postdoctoral fellow at Departamento de An\'{a}lisis Matem\'{a}tico, Universidad de Valencia.
He was supported  by a Marie Curie Intra European Fellowship (MEIF-CT-2005-006958). The fourth and
fifth authors were supported by Spanish MEC Project MTM2006-04837 and Junta de Andaluc\'{\i}a grants
FQM-185 and P06-FQM-01438.}

\begin{abstract}
We study two-dimensional Banach spaces with polynomial
numerical indices equal to zero.
\end{abstract}

\maketitle

\section{Introduction}
The polynomial numerical indices of a Banach space are constants
relating the norm and the numerical radius of homogeneous
polynomials on the space. Let us present the relevant definitions.
For a Banach space $X$, we write $B_X$ for the closed unit ball,
$S_X$ for the unit sphere, $X^*$ for the dual space, and $\Pi(X)$
for the subset of $X\times X^*$ given by
$$
\Pi(X)=\left\{(x,x^*)\in S_X\times S_{X^*}\ : \ x^*(x)=1\right\}.
$$
For $k\in\N$ we denote by $\Pol[k]{X}$ the space of all
$k$-homogeneous polynomials from $X$ into $X$ endowed with the norm
$$
\|P\|=\sup\{\|P(x)\|\ : \ x\in B_X\}.
$$
We recall that a mapping $P:X\longrightarrow X$ is called a
(continuous) $k$-homogeneous polynomial on $X$ if there is a
$k$-linear continuous mapping $A:X\times \cdots \times X
\longrightarrow X$ such that $P(x)=A(x,\ldots,x)$ for every
$x\in X$. We refer to the book \cite{Dineenbook} for
background. Given $P\in\Pol[k]{X}$, the \emph{numerical range}
of $P$ is the subset of the scalar field given by
$$
V(P)=\{x^*(P (x))\ : \ (x,x^*)\in \Pi(X)\},
$$
and the \emph{numerical radius} of $P$ is
$$
v(P)=\sup \{|x^*(P (x))|\ : \ (x,x^*)\in \Pi(X)\}.
$$

Recently, Y.~S.~Choi, D.~Garc\'{\i}a, S.~G.~Kim and M.~Maestre
\cite{C-G-K-M} have introduced the \emph{polynomial numerical
index of order $k$} of a Banach space $X$ as the constant
$n^{(k)}(X)$ defined by
\begin{align*}
n^{(k)}(X)&=\max\left\{c\geq 0\ : \ c\,\|P\| \leqslant v(P) \ \
\forall\, P\in \Pol[k]{X}\right\}\\
&=\inf\left\{v(P)\ : \ P\in \Pol[k]{X},\ \|P\|=1\right\}
\end{align*}
for every $k\in\N$. This concept is a generalization of the
\emph{numerical index} of a Banach space (recovered for $k=1$)
which was first suggested by G.~Lumer in 1968 \cite{D-Mc-P-W}.

Let us recall some facts about the polynomial numerical index
which are relevant to our discussion. We refer the reader to
the already cited \cite{C-G-K-M} and to
\cite{ChGarMaMa-QJM,KMM,Lee} for recent results and background.
The easiest examples are $n^{(k)}(\R)=1$ and $n^{(k)}(\C)=1$
for every $k\in \N$. In the complex case, $n^{(k)}(C(K))=1$ for
every $k\in \N$ and $n^{(2)}(\ell_1)\leq \frac12$. The real
spaces $\ell_1^{m}$, $\ell_\infty^{m}$, $c_0$, $\ell_1$ and
$\ell_\infty$ have polynomial numerical index of order $2$
equal to $1/2$ (\cite{KMM}). The only finite-dimensional real
Banach space $X$ with $n^{(2)}(X)=1$ is $X=\R$ (\cite{Lee}).
The inequality $ n^{(k+1)}(X)\leq n^{(k)}(X) $ holds for every
real or complex Banach space $X$ and every $k\in \N$, giving
that $n^{(k)}(H)=0$ for every $k\in \N$ and every real Hilbert
space $H$ of dimension greater than one. This last fact is not
true in the complex case in which it follows from an old result
by Harris (\cite{Harris}) that $n^{(k)}(X)\geq
k^{\frac{k}{1-k}} $ for every complex Banach space $X$ and
every $k\geq 2$. Finally, $n^{(k)}(X^{**})\leq n^{(k)}(X)$ for
every real or complex Banach space $X$ and every $k\in \N$, and
this inequality may be strict.

For a real finite-dimensional space $X$, the fact $n(X)=0$ is
equivalent to $X$ having infinitely many surjective isometries
\cite[Theorem~3.8]{Ros1984}. In particular, it can be shown
that the only two-dimensional space with infinitely many
surjective isometries is the Hilbert space. For bigger
dimensions the situation is not that easy but it is possible to
somehow describe all these spaces (see \cite{M-M-R} and
\cite{Ros1984}).

We will show in this paper that the situation for numerical
indices of higher order is not so tidy, and many different
examples of two-dimensional spaces with numerical indices of
higher order equal to zero will be given. Namely, we start by
showing that $n^{(p-1)}(\ell_p^2)=0$ if $p$ is an even number
and, actually, that $n^{(2k-1)}(X)=0$ if $(X, \|\cdot\|)$ is a
real Banach space of dimension greater than one such that the
mapping $x \longmapsto \|x\|^{2k}$ is a $2k$-homogeneous
polynomial. Next, we describe all absolute normalized and
symmetric norms on $\R^2$ such that the polynomial numerical
index of order $3$ is zero showing, in particular, that all
these norms come from a polynomial. Finally, we present some
examples proving that the situation is different for higher
orders and for nonsymmetric norms. This is the content of
section~\ref{sec:2}. We include an appendix
(section~\ref{sec:appendix}) where it is shown that the
formulae appearing in the examples are actually norms on
$\R^2$.

Let us finish the introduction with some notation. We say that
a norm $\|\cdot\|$ in $\R^2$ is \emph{absolute} if
$\|(x,y)\|=\|(|x|,|y|)\|$ for every $x,y\in \R$,
\emph{normalized} if $\|(1,0)\|=\|(0,1)\|=1$ and
\emph{symmetric} whenever $\|(x,y)\|=\|(y,x)\|$ for every
$x,y\in \R$. For $1\leq p \leq \infty$, we write $\|\cdot\|_p$
to denote the $p$-norm and $\ell_p^{d}$ to denote the
$d$-dimensional $\ell_p$-space (i.e.\ $\R^d$ endowed with
$\|\cdot\|_p$).

Let $X$ be a Banach space, $k\in \N$ and let $S\in L(X)$ be a
surjective isometry. Given $P\in \Pol[k]{X}$, $S^{-1}\circ P
\circ S \in \Pol[k]{X}$ clearly and one has that
\begin{equation}\label{eq-hom-num-range}
V(S^{-1}\circ P \circ S)=V(P) \quad \text{and} \quad \|S^{-1}\circ P \circ S\|=\|P\|
\end{equation}
(indeed, these equalities follow easily from
\cite[Theorem~2]{Harris} but they are actually straightforwardly
deduced from the definition of numerical range).

Let us also recall that $X$ is a \emph{smooth} space if given
$x\in X\setminus \{0\}$ there exists a unique norm-one linear
functional $x^*\in X^*$ such that $x^*(x)=\|x\|$. Moreover,
this functional is given by the derivative $D_x\|\cdot\|$ of
the norm at $x$. If $X$ is a finite-dimensional space it is
known \cite[Corollary 1.5 and Remark 1.7]{DGZ} that $X$ is
smooth if and only if its norm is Fr\'{e}chet differentiable on
$S_X$.

\section{The results}\label{sec:2}
Our first goal is to discuss the polynomial numerical index of
the real spaces $\ell_p^2$ for $1<p<\infty$. Let us recall that
$n^{(k)}(\ell_p^2)>0$ for $p=1,\infty$ and every $k\in\N$
\cite[Corollary~2.5]{KMM}.

\begin{example}\label{example:l-p} Let $1<p<\infty$.
\begin{enumerate}
\item[(a)] If $p$ is an even number and $k\in \N$, then
    $n^{(k)}(\ell_p^2)=0$ if $k\geq p-1$ and
    $n^{(k)}(\ell_p^2)>0$ if $k<p-1$.
\item[(b)] If $p$ is not an even number, then
    $n^{(k)}(\ell_p^2)>0$ for every $k\in \N$.
\end{enumerate}
\end{example}

\begin{proof}
(a). Given $(x,y)\in S_{\ell_p^2}$\,, the only functional which
norms $(x,y)$ is $(x^{p-1}, y^{p-1}) \in \ell_{p/p-1}^2$. If we
consider the polynomial $P\in\Pol[p-1]{\ell_p^2}$ defined by
$P(x, y)=(-y^{p-1}, x^{p-1})$ then,
$$
(x^{p-1},
y^{p-1})(P(x,y))=-x^{p-1} y^{p-1} + y^{p-1} x^{p-1} = 0
$$
for all $(x,y) \in S_{\ell_p^2}$ implying that $v(P)=0$ and
$n^{(p-1)}(\ell_p^2)=0$. Therefore, for $k\geq p-1$,
$n^{(k)}(\ell_p^2)=0$ by \cite[Proposition 2.5]{C-G-K-M}. If $k
< p-1$ and $P=(P_1,P_2)\in \Pol[k]{\ell_p^2}$ is non zero,
observe that
$$
x^{p-1}P_1(x,y)+ y^{p-1}P_2(x,y)
$$
is a scalar homogeneous polynomial which cannot be constant
zero. Indeed, we can assume without loss of generality that
$P_1$ is non-zero and evaluate the above expression at $(x,1)$
for $x\in \R$ obtaining
$$
x^{p-1}P_1(x,1)+ P_2(x,1).
$$
We observe that the first summand is a non-zero polynomial in
the variable $x$ of degree at least $p-1$ and the second one
has degree at most $k$. So their sum cannot be equal to zero
for every $x\in\R$.

(b). When $p$ is not an even number, the only linear functional
which norms $(x,y)\in \ell_p^2$ with $x,y\neq 0$ is
$(x|x|^{p-2}, y|y|^{p-2}) \in \ell_{p/p-1}^2$. If
$P=(P_1,P_2)\in \Pol[k]{\ell_p^2}$ satisfies $v(P)=0$, then
\begin{equation}\label{eq:duality-lp}
x|x|^{p-2} P_1(x,y) + y |y|^{p-2} P_2(x,y)=0
\end{equation}
for every $x,y\neq 0$. Now, if $p\notin \N$, evaluating at $(x,1)$
for every $x>0$, we get
$$
x^{p-1} P_1(x,1)=-P_2(x,1) \qquad (x\in \R^+).
$$
If $P_1(x,1)$ is not zero in $\R^+$, dividing the above
equation by $x^{p-1+\text{deg}(P_1(x,1))}$ and taking the limit
as $x\rightarrow +\infty$, we get a contradiction. Hence, we
have that $P_1(x,1)=0$ for $x\in \R^+$ which implies
$P_2(x,1)=0$ for $x\in \R^+$ and, therefore, that $P=0$.
Finally, if $p\in\N$ is odd, we use \eqref{eq:duality-lp} to
obtain
\begin{align*}
x^{p-1} P_1(x,1)+P_2(x,1)&=0 \qquad (x\in \R^+)\\
-x^{p-1}P_1(x,1)+P_2(x,1)&=0 \qquad (x\in \R^-)
\end{align*}
which, together with the fact that $x^{p-1} P_1(x,1)+P_2(x,1)$ and
$-x^{p-1} P_1(x,1)+P_2(x,1)$ are polynomials, implies
\begin{align*}
x^{p-1} P_1(x,1)+P_2(x,1)&=0 \qquad (x\in \R)\\
-x^{p-1}P_1(x,1)+P_2(x,1)&=0 \qquad (x\in \R).
\end{align*}
This obviously gives $P_1(x,1)=0$ and $P_2(x,1)=0$ for $x\in\R$,
implying that $P=0$ and finishing the proof.
\end{proof}

Since $\ell_p^2$ is an absolute summand of $\ell_p$ and
$\ell_p^d$ for every $d\geq2$, by
\cite[Proposition~2.1]{ChGarMaMa-QJM} we get the following.

\begin{corollary}\label{cor:lp-dim-d-infty}
Let $p$ be an even number and $d\geq 2$ an integer. Then,
$n^{(p-1)}(\ell_p)=n^{(p-1)}(\ell_p^d)=0$.
\end{corollary}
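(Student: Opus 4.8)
The plan is to deduce the corollary directly from Example~\ref{example:l-p}(a) together with the cited permanence property \cite[Proposition~2.1]{ChGarMaMa-QJM}. First I would recall what that proposition says: if a Banach space $Y$ decomposes as an absolute sum $Y = X \oplus_E Z$ (an ``absolute summand'' in the sense of the cited paper, where $E$ is an absolute norm on $\R^2$ governing how the pieces are combined), then $n^{(k)}(Y) \leq n^{(k)}(X)$ for every $k \in \N$. The point is that one can extend a polynomial on $X$ to one on $Y$ by composing with the norm-one projection onto $X$ and the inclusion of $X$ into $Y$, without increasing the norm or the numerical radius, so the infimum defining $n^{(k)}(Y)$ is at most that defining $n^{(k)}(X)$.

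Next I would verify the hypothesis that $\ell_p^2$ is an absolute summand of both $\ell_p$ and $\ell_p^d$ for $d \geq 2$. This is immediate: writing $\ell_p^d = \ell_p^2 \oplus_p \ell_p^{d-2}$ (with the convention $\ell_p^0 = \{0\}$ when $d = 2$, or simply noting $\ell_p^2 \subseteq \ell_p^d$ is a $1$-complemented subspace via the coordinate projection) exhibits $\ell_p^2$ as an absolute summand, the ambient absolute norm on $\R^2$ being $\|\cdot\|_p$ itself. The same holds for $\ell_p = \ell_p^2 \oplus_p \ell_p(\{3,4,\dots\})$. So the coordinate projection onto the first two coordinates is a norm-one projection and the hypothesis of the cited proposition is met.

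Finally I would combine the pieces: since $p$ is an even number, Example~\ref{example:l-p}(a) with $k = p-1 \geq p-1$ gives $n^{(p-1)}(\ell_p^2) = 0$; then \cite[Proposition~2.1]{ChGarMaMa-QJM} yields
$$
n^{(p-1)}(\ell_p) \leq n^{(p-1)}(\ell_p^2) = 0
\quad\text{and}\quad
n^{(p-1)}(\ell_p^d) \leq n^{(p-1)}(\ell_p^2) = 0,
$$
and since polynomial numerical indices are nonnegative by definition, both are equal to zero.

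There is essentially no obstacle here — the corollary is a formal consequence of the example and a known permanence result. The only mild point requiring care is to state the decomposition correctly so that $\ell_p^2$ genuinely appears as an absolute summand in the precise sense required by \cite[Proposition~2.1]{ChGarMaMa-QJM} (in particular, checking the $d = 2$ edge case, where the statement is trivial since $\ell_p^2 = \ell_p^2$), and to remember to invoke nonnegativity of $n^{(k)}$ to upgrade the inequality to an equality.
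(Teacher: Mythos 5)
Your proof is correct and follows exactly the paper's argument: the corollary is stated there with the one-line justification that $\ell_p^2$ is an absolute summand of $\ell_p$ and $\ell_p^d$, so \cite[Proposition~2.1]{ChGarMaMa-QJM} combined with Example~\ref{example:l-p}(a) gives the result. Your additional care about the precise decomposition and the nonnegativity of the index only fleshes out what the paper leaves implicit.
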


\begin{remark}
It is claimed in \cite{kim} that $n^{(k)}(\ell_p^d)>0$ for
every $k\in\N$, every $1<p<\infty$, $p \neq 2$, and every
integer $d\geq 2$. Going into the proof of that result, one
realizes that it is needed that $p$ is not an even integer.
\end{remark}

It is known that $n(X^\ast) \leq n(X)$ for every Banach space
$X$. Example~\ref{example:l-p} shows that, unlike the linear
case, there is no general inequality between the polynomial
numerical indices of a Banach space and the ones of its dual.

\begin{example}
{\slshape The reflexive space $X=\ell_4^2$ satisfies
$n^{(k)}(X)=0$ and $n^{(k)}(X^*)>0$ for all $k\geq 3$.}
\end{example}

Our next result is a generalization of
Corollary~\ref{cor:lp-dim-d-infty} to every Banach space whose
norm raised to an even power is a homogeneous polynomial.

\begin{prop}\label{prop:norm-polynomial}
Let $k$ be a positive integer and let $(X, \|\cdot\|)$ be a
real Banach space of dimension greater than one. If the mapping
$x \longmapsto \|x\|^{2k}$ is a $2k$-homogeneous polynomial,
then $n^{(2k-1)}(X)=0$.
\end{prop}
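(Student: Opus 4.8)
The plan is to let the hypothesis hand us the smoothness of $X$, which makes the norming functionals completely explicit, and then to write down almost by inspection a nonzero $(2k-1)$-homogeneous polynomial whose numerical range is reduced to $\{0\}$.

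First I would set $N(x)=\|x\|^{2k}$ and note that $\|x\|=N(x)^{1/2k}$ with $N(x)>0$ for $x\neq0$; since $N$ is a polynomial, the norm is differentiable on $X\setminus\{0\}$, so $X$ is smooth (in finite dimensions this is precisely the differentiability fact recalled in the introduction). Hence for every $x\in S_X$ the unique norming functional is $D_x\|\cdot\|=\tfrac{1}{2k}\,D_xN$, which I will call $Q(x)$. The structural point I need is that $Q$ is a continuous $(2k-1)$-homogeneous polynomial from $X$ into $X^*$: if $A$ denotes the symmetric $2k$-linear map with $N(x)=A(x,\dots,x)$, then $D_xN(\cdot)=2k\,A(x,\dots,x,\cdot)$, so $Q$ is induced by the $(2k-1)$-linear map $(x_1,\dots,x_{2k-1})\longmapsto A(x_1,\dots,x_{2k-1},\cdot)\in X^*$; moreover Euler's identity for homogeneous maps gives $Q(x)(x)=N(x)=\|x\|^{2k}$ for all $x$.

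Next comes the construction. Since $\dim X>1$, fix linearly independent vectors $a,b\in X$ and define $P\colon X\to X$ by
$$
P(x)=Q(x)(a)\,b-Q(x)(b)\,a.
$$
As $x\longmapsto Q(x)(a)$ and $x\longmapsto Q(x)(b)$ are scalar $(2k-1)$-homogeneous polynomials, $P\in\Pol[2k-1]{X}$, and $P\neq0$ because $P(a)=\|a\|^{2k}\,b-Q(a)(b)\,a\neq0$ (here $a,b$ are independent and $\|a\|\neq0$). Finally, for any $(x,x^*)\in\Pi(X)$ the functional $x^*$ norms $x\in S_X$, so smoothness forces $x^*=Q(x)$ and
$$
x^*(P(x))=Q(x)(P(x))=Q(x)(a)\,Q(x)(b)-Q(x)(b)\,Q(x)(a)=0.
$$
Thus $v(P)=0$ while $\|P\|>0$, and therefore $n^{(2k-1)}(X)=0$.

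I do not foresee a real obstacle. The only points that deserve a line of care are the verification that $x\mapsto D_xN$ genuinely lies in the space of $(2k-1)$-homogeneous polynomials from $X$ to $X^*$ (so that $P$ is a bona fide $(2k-1)$-homogeneous polynomial and $\|P\|$, $v(P)$ make sense), the identification of the norming functional through smoothness, and the remark that the hypothesis $\dim X>1$ is exactly what provides the independent pair $a,b$; for $X=\R$ the construction collapses, consistently with $n^{(k)}(\R)=1$.
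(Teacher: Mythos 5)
Your proposal is correct and follows essentially the same route as the paper: both identify the unique norming functional at $x$ as $y\mapsto A(x,\dots,x,y)$ via differentiability of $\|\cdot\|^{2k}$, and both take the polynomial $P(x)=A(x,\dots,x,x_0)\,y_0-A(x,\dots,x,y_0)\,x_0$ for a linearly independent pair, whose numerical range vanishes by the same antisymmetric cancellation. The extra care you take (checking $P\in\Pol[2k-1]{X}$ and $P\neq 0$ explicitly via Euler's identity) is sound and only makes explicit what the paper leaves to the reader.
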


\begin{proof}
Let $R$ and $A$ be respectively the $2k$-homogeneous scalar
polynomial and the corresponding symmetric $2k$-linear form
such that $A(x, \ldots, x)=R(x)=\|x\|^{2k}$ for every $x\in X$.
Since $R$ is G\^{a}teaux differentiable on $S_X$ so is $\|\cdot\|$.
Moreover, for fixed $x\in S_X$, we have that
$$
2kD_x\|\cdot\|(y)=D_xR(y)=2kA(x,\ldots,x,y)
$$
for every $y\in X$ and, therefore, the functional given by
$x^*(y)=A(x, \ldots,x, y)$ is the only norm-one functional
satisfying $x^*(x)=1$. To finish the proof, we fix $x_0,y_0$
two linearly independent elements of $X$ and we define
$P\in\Pol[2k-1]{X}$ by
$$
P(x)=-A(x,\ldots, x, y_0)x_0 + A(x,\ldots, x, x_0)y_0 \qquad
\big(x\in X\big)
$$
which clearly satisfies $P \neq 0$. Finally, for $(x,x^*)\in
\Pi(X)$ we have that
\begin{align*}
x^*\big(P(x)\big)&=A\big(x,\ldots,x,P(x)\big)\\
&= A\big(x,\ldots,x,-A(x,\ldots, x, y_0)x_0 + A(x,\ldots, x, x_0)y_0\big)\\
&=-A(x,\ldots, x, y_0)A(x,\ldots, x, x_0) + A(x,\ldots, x,
x_0)A(x,\ldots, x, y_0)=0
\end{align*}
so $v(P)=0$ and, consequently, $n^{(2k-1)}(X)=0$.
\end{proof}

The rest of the paper is devoted to the two-dimensional case.
We start with some facts about two-dimensional spaces with
polynomial numerical index $0$ which will be useful in this
paper.

\begin{theorem}\label{thm:initial-facts}
Let $(X, \|\cdot\|)$ be a two dimensional real space such that
$n^{(k)}(X)=0$ for some $k\geq 1$, let $k_0=\min\{k \, : \,
n^{(k)}(X)=0\}$, and $P=(P_1,P_2)\in \Pol[k_0]{X}$ with
$v(P)=0$. The following hold:
\begin{enumerate}
\item[(a)] The $(k_0+1)$-homogeneous scalar polynomial
    defined by
    $$
    Q(x,y)=yP_1(x,y)-xP_2(x,y)\qquad \big((x,y)\in X\big)
    $$
    only vanishes at $(0,0)$.
\item[(b)] $k_0$ is odd.
\item[(c)] $(X,\|\cdot\|)$ is a smooth space. Moreover, for
    every non-zero $(x,y)\in X$ the unique functional
    $(x^*,y^*)\in S_{X^*}$ which norms $(x,y)$ is given by
$$
x^*= \frac{-P_2(x,y)\|(x,y)\|}{Q(x,y)} \qquad
\text{and} \qquad y^*=
\frac{P_1(x,y)\|(x,y)\|}{Q(x,y)}\,.
$$
\item[(d)] The polynomial $P$ is unique in the following
    sense:  $\widetilde{P}\in \Pol[k_0]{X}$ satisfies
    $v(\widetilde{P})=0$ if and only if there exists
    $\lambda\in\R$ so that $\widetilde{P}=\lambda P$.
\end{enumerate}
\end{theorem}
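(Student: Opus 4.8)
The plan is to reduce (a)--(d) to a single algebraic fact: that $P_1$ and $P_2$ are coprime in $\R[x,y]$. Throughout I identify $X^*$ with $\R^2$ via the dot product, so that the hypothesis $v(P)=0$ reads: $uP_1(x,y)+vP_2(x,y)=0$ whenever $(x,y)\in S_X$ and $(u,v)\in S_{X^*}$ norms $(x,y)$; also $P\neq 0$ (the infimum defining $n^{(k_0)}(X)$ is attained on the compact sphere of $\Pol[k_0]{X}$). The starting observation is a one-point computation: if $(x,y)\in S_X$ and $Q(x,y)=0$, then $(P_1(x,y),P_2(x,y))$ is a scalar multiple of $(x,y)$, and pairing with any norming functional of $(x,y)$ forces that scalar to vanish; hence on $S_X$ one has $Q(x,y)=0\iff P(x,y)=0$. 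When $P(x,y)\neq 0$, the two linear conditions on $(u,v)$ --- annihilating $(P_1(x,y),P_2(x,y))$ and $ux+vy=1$ --- have coefficient determinant $Q(x,y)\neq 0$, so they determine $(u,v)$ uniquely; thus $X$ is smooth at $(x,y)$ and, by Cramer's rule, its norming functional is $\bigl(-P_2(x,y)/Q(x,y),\ P_1(x,y)/Q(x,y)\bigr)$.

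Next I would prove that $P_1,P_2$ are coprime. If they had a common irreducible homogeneous factor $h$, then $h$ is either a definite binary quadratic or, up to a scalar, a linear form $\ell(x,y)=y_0x-x_0y$ with $(x_0,y_0)\in S_X$. In the quadratic case, write $P=hR$ with $0\neq R\in\Pol[k_0-2]{X}$; dividing $uP_1+vP_2=0$ by $h(x,y)>0$ gives $v(R)=0$, which is impossible --- it either contradicts the minimality of $k_0$ or, when $k_0=2$, asserts that a nonzero constant polynomial has zero numerical radius. In the linear case, let $m\geq 1$ be maximal with $\ell^m\mid P_1,P_2$, write $P_i=\ell^m\widetilde P_i$ and $\widetilde P=(\widetilde P_1,\widetilde P_2)\in\Pol[k_0-m]{X}$; then $\widetilde P\neq 0$ and, by maximality of $m$, $w:=\widetilde P(x_0,y_0)\neq 0$. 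Dividing $uP_1+vP_2=0$ by $\ell(x,y)^m$ gives $u\widetilde P_1(x,y)+v\widetilde P_2(x,y)=0$ for every $(x,y)\in S_X$ other than $\pm(x_0,y_0)$. What remains is to get the same identity at $(x_0,y_0)$ (then also at $-(x_0,y_0)$ by homogeneity), i.e.\ that $x^*(w)=0$ for every norming functional $x^*$ of $(x_0,y_0)$; this yields $v(\widetilde P)=0$ and the same contradiction as before.

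This last step is where I expect the real difficulty to lie. The set $D_0$ of norming functionals of $(x_0,y_0)$ is a closed segment in $S_{X^*}$ and $x^*\mapsto x^*(w)$ is affine on it, so it suffices to handle the two endpoints of $D_0$. Invoking the (standard) monotonicity of the duality map in a two-dimensional normed space, each endpoint of $D_0$ is a limit $\lim_n(u_n,v_n)$ of norming functionals of points $(x_n,y_n)\to(x_0,y_0)$ along $S_X$ with $(x_n,y_n)\neq\pm(x_0,y_0)$; letting $n\to\infty$ in $u_n\widetilde P_1(x_n,y_n)+v_n\widetilde P_2(x_n,y_n)=0$ gives $x^*(w)=0$ at that endpoint. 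Since $w\neq 0$, its annihilator meets $S_{X^*}$ in an antipodal pair while $D_0$ contains no antipodal pair; hence $D_0$ is in fact a single point, and in any case $x^*(w)=0$ on all of $D_0$. This proves coprimeness, so $P$ has no zero off the origin (a common real zero would produce a common linear factor of $P_1,P_2$), and then, by the opening observation and homogeneity, $Q$ vanishes only at $(0,0)$ --- this is (a).

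Finally, (b)--(d) are short. For (b): by (a), $Q$ is a nonzero $(k_0+1)$-homogeneous polynomial with no zero on the connected set $\R^2\setminus\{0\}$, hence of constant sign there; since $Q(-z)=(-1)^{k_0+1}Q(z)$, the exponent $k_0+1$ must be even, so $k_0$ is odd. For (c): by (a) the opening observation applies at every point of $S_X$, giving smoothness of $X$ and the formula on $S_X$; the formula for arbitrary nonzero $(x,y)$ follows by evaluating at $(x,y)/\|(x,y)\|$ and using that $P_1,P_2$ are $k_0$-homogeneous while $Q$ is $(k_0+1)$-homogeneous (this produces the extra factor $\|(x,y)\|$). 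For (d): the backward implication is immediate from $v(\lambda P)=|\lambda|\,v(P)$; conversely, if $\widetilde P\in\Pol[k_0]{X}$ satisfies $v(\widetilde P)=0$, substituting the norming functional from (c) into $u\widetilde P_1+v\widetilde P_2=0$ yields $P_1\widetilde P_2-P_2\widetilde P_1=0$ on $S_X$, hence identically (the difference is homogeneous and $S_X$ meets every ray); coprimeness of $P_1,P_2$ then forces $P_1\mid\widetilde P_1$, a degree count gives $\widetilde P_1=\lambda P_1$ for some $\lambda\in\R$, and cancelling $P_1$ in $P_1\widetilde P_2=\lambda P_1P_2$ gives $\widetilde P_2=\lambda P_2$, so $\widetilde P=\lambda P$.
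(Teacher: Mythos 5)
Your argument is correct and follows the same skeleton as the paper's: everything is reduced to the coprimality of $P_1$ and $P_2$, from which (a) follows because a common real zero off the origin would produce a common linear factor, and (b)--(d) are then obtained exactly as in the paper (parity of $\deg Q$ from its constant sign, Cramer's rule for the norming functional, and the divisibility/degree count for uniqueness; your cancellation of $P_1$ in (d) tacitly uses $P_1\not\equiv 0$, which follows from (a) since otherwise $Q(0,1)=0$). The genuine difference is in how coprimality is proved. The paper writes $P_i=SR_i$, gets $x^*R_1(x,y)+y^*R_2(x,y)=0$ off the finite zero set of $S$ on $S_X$, and then invokes the connectedness of the numerical range $V(R)$ from \cite{B-C-S} to conclude $v(R)=0$. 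You instead split according to the type of the irreducible common factor --- a definite quadratic, where there are no exceptional points at all, versus a linear form $\ell$ vanishing at $\pm(x_0,y_0)$ --- and in the latter case you extend the identity to the exceptional points by hand: the endpoints of the segment $D_0$ of norming functionals at $(x_0,y_0)$ are one-sided limits of norming functionals at nearby smooth-enough points, and $x^*\mapsto x^*(w)$ is affine on $D_0$, so it vanishes identically there. Your route is more elementary (no citation needed) and is actually more explicit at the one delicate spot: a priori a non-smooth exceptional point could contribute a whole interval containing $0$ to $V(R)$, and then connectedness of $V(R)$ by itself does not force $v(R)=0$; the endpoint/limit argument you supply is exactly the extra ingredient that settles this. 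The degenerate cases ($R$ or $\widetilde P$ a nonzero constant when the common factor exhausts the degree) are correctly disposed of by your observation that a nonzero constant map cannot have numerical radius zero.
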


\begin{proof}
Given $P=(P_1,P_2)\in \Pol[k_0]{X}$ with $v(P)=0$, we claim
that $P_1$ and $P_2$ do not have any factor in common and, in
particular, that $P$ only vanishes at $(0,0)$. Indeed, if
$k_0\geq 2$, suppose that there exist scalar polynomials
$S,R_1,R_2$ with $\text{deg}(R_i)<k_0$ such that $P_i=SR_i$ for
$i=1,2$. Since $v(P)=0$, given an element $(x,y)\in S_X$ and a
linear functional $(x^*,y^*)\in S_{X^*}$ satisfying
$x^*x+y^*y=1$, we have that
$$
x^*P_1(x,y) + y^*P_2(x,y) = 0
$$
and, therefore,
$$
S(x,y)\Big(x^*R_1(x,y) + y^*R_2(x,y)\Big) = 0
$$
which gives us $x^*R_1(x,y) + y^*R_2(x,y) = 0$ whenever
$S(x,y)\neq0$. Writing $R=(R_1,R_2)$ and using that $V(R)$ is
connected \cite[Theorem~1]{B-C-S} and that $S$ only has a finite
number of zeros in $S_X$, we deduce $v(R)=0$ and so $n^{(k)}(X)=0$
for some $k<k_0$, contradicting the minimality of $k_0$. If $k_0=1$,
the above argument is immediate.

(a). The fact that $Q(x_0,y_0)=0$ for some $(x_0,y_0)\neq0$
yields that $P(x_0,y_0) = \lambda (x_0,y_0)$ for some
$\lambda\in \R$ which, together with $v(P)=0$, implies that
$\lambda=0$ contradicting the fact that $P$ only vanishes at
$(0,0)$.

(b). Since $Q$ only vanishes at $(0,0)$, its degree $k_0+1$
must be even and thus $k_0$ is odd.

(c). Given $(x,y)\in S_X$, we observe that any functional
$(x^*,y^*)\in S_{X^*}$ norming $(x,y)$ satisfies the linear
equations $ x^*x + y^*y = 1$ and $x^*P_1(x,y) + y^*P_2(x,y) =
0$ which uniquely determine $(x^*,y^*)$ as
\begin{equation*}
x^*= \frac{-P_2(x,y)}{Q(x,y)} \qquad \text{and} \qquad y^*=
\frac{P_1(x,y)}{Q(x,y)}
\end{equation*}
since $Q(x,y)\neq 0$. For arbitrary $(x,y)\neq (0,0)$ it
suffices to use what we have just proved and the homogeneity.

(d). Since $v(\widetilde{P})=0$, for every
$((x,y),(x^*,y^*))\in\Pi(X)$ we have
$x^*\widetilde{P}_1(x,y)+y^*\widetilde{P}_2(x,y)=0$ which,
together with (c), gives
$$
\frac{-P_2(x,y)}{Q(x,y)}\widetilde{P}_1(x,y)+\frac{P_1(x,y)}{Q(x,y)}
\widetilde{P}_2(x,y)=0
$$
and, therefore,
$$
P_1(x,y)\widetilde{P}_2(x,y)=P_2(x,y)\widetilde{P}_1(x,y)
$$
for every $(x,y)\in S_X$. Now it suffices to recall that $P_1$ and
$P_2$ do not have any factor in common to get the result.
\end{proof}

We have to restrict ourselves to the two-dimensional case since
the above result is not true for higher dimensions.

\begin{remark}\label{rem-dimension-superior}
{\slshape Consider the real Banach space $X=\ell_2^2 \oplus_1
Y$, where $Y$ is any non-null Banach space. Then $n^{(k)}(X)
\leq n^{(k)}(\ell_2^2)=0$ for every $k\in \N$ by
\cite[Proposition~2.1]{ChGarMaMa-QJM}. But the norm of $X$ is
not smooth at points $(0,y)\in S_X$ with $y\in S_Y$. Also, if
we choose $Y$ such that $n^{(k)}(Y)=0$, there are different
non-null polynomials whose numerical radii are zero.}
\end{remark}

A consequence of Theorem~\ref{thm:initial-facts} is the
following partial answer to Problem~42 of \cite{KaMaPa}.

\begin{corollary}
If $X$ is a two-dimensional real Banach space with
$n^{(2)}(X)=0$, then $n(X)=0$.
\end{corollary}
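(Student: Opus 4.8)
The plan is to deduce this immediately from Theorem~\ref{thm:initial-facts}, specifically from part~(b), which forces the first order at which the polynomial numerical index vanishes to be odd.

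First I would observe that the hypothesis $n^{(2)}(X)=0$ says precisely that the set $\{k\in\N\,:\,n^{(k)}(X)=0\}$ is non-empty, as it contains $k=2$. Hence the quantity $k_0=\min\{k\geq 1\,:\,n^{(k)}(X)=0\}$ is well defined and satisfies $1\leq k_0\leq 2$. Now Theorem~\ref{thm:initial-facts}(b) applies to this $k_0$ and tells us that $k_0$ is odd. The only integer in $\{1,2\}$ that is odd is $1$, so $k_0=1$, which by definition of $k_0$ means $n(X)=n^{(1)}(X)=0$, as desired.

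There is no real obstacle here: the content is entirely carried by Theorem~\ref{thm:initial-facts}, and in particular by part~(a)--(b), whose proof shows that any $P$ witnessing $v(P)=0$ at the minimal order produces a scalar polynomial $Q$ of degree $k_0+1$ vanishing only at the origin, which is impossible for odd $k_0+1$. One should perhaps remark, for context, that this is sharp in the sense that $n^{(2)}(X)=0$ cannot be replaced by $n^{(k)}(X)=0$ for a general even $k$ and still conclude $n(X)=0$ without further argument, but for $k=2$ the parity obstruction does all the work.
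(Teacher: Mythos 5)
Your argument is correct and is exactly the deduction the paper intends: the paper presents this corollary as an immediate consequence of Theorem~\ref{thm:initial-facts}, and the parity of $k_0$ from part~(b) combined with $k_0\leq 2$ forces $k_0=1$. Nothing is missing.
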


It is a well known result (see \cite[Corollary~2.5]{M-M-R} and
\cite[Theorem~3.1]{Ros1984}) that the only two dimensional real
space with numerical index $0$ is the Euclidean space. The
above theorem allows us to give a different and elementary
proof of this fact. We include it here since it gives some
ideas which we will use later.

\begin{corollary}\label{cor:Hilbert-space}
Let $X$ be a two dimensional real space with $n(X)=0$. Then,
$X$ is the two dimensional real Euclidean space.
\end{corollary}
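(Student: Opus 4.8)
The plan is to apply Theorem~\ref{thm:initial-facts} with $k_0=1$, which is legitimate since $n(X)=n^{(1)}(X)=0$. This produces a non-zero \emph{linear} operator $P=(P_1,P_2)$ on $X$ with $v(P)=0$, it tells us that the quadratic form $Q(x,y)=yP_1(x,y)-xP_2(x,y)$ vanishes only at $(0,0)$, and it tells us that $X$ is smooth. An immediate observation is that $P$ has no real eigenvalue: a real eigenvector $u=(u_1,u_2)$ with $Pu=\lambda u$ would give $Q(u_1,u_2)=u_2(\lambda u_1)-u_1(\lambda u_2)=0$, a contradiction.

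The key idea is that, for a \emph{linear} $P$, the condition $v(P)=0$ says precisely that the flow generated by $P$ preserves the norm. To make this precise I would use smoothness (Theorem~\ref{thm:initial-facts}(c)): the norm is differentiable on $X\setminus\{0\}$ and $D_z\|\cdot\|$ is the unique norm-one functional with $D_z\|\cdot\|(z)=\|z\|$; being $0$-homogeneous it coincides with the supporting functional at $z/\|z\|$, so $\big(z/\|z\|,\,D_z\|\cdot\|\big)\in\Pi(X)$, and therefore $D_z\|\cdot\|(Pz)=\|z\|\,D_z\|\cdot\|\big(P(z/\|z\|)\big)=0$ because $v(P)=0$. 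Consequently, for any $u_0\neq 0$ and $u(t)=\e^{tP}u_0$ (which never vanishes), the chain rule gives $\frac{d}{dt}\|u(t)\|=D_{u(t)}\|\cdot\|\big(Pu(t)\big)=0$, so $\|\e^{tP}u_0\|=\|u_0\|$ for all $t\in\R$. Hence each $\e^{tP}$ is a surjective linear isometry of $X$.

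It then remains to exploit this one-parameter group of isometries. Since $P$ has no real eigenvalue, its eigenvalues are $\alpha\pm i\beta$ with $\beta\neq 0$, whence $\e^{tP}=\e^{\alpha t}\,TR_{\beta t}T^{-1}$ for some invertible $T$, where $R_\theta$ denotes the Euclidean rotation by angle $\theta$. As $t\mapsto\|TR_{\beta t}T^{-1}\|$ is continuous, $(2\pi/\beta)$-periodic and bounded away from $0$, while $\|\e^{tP}\|=1$ for every $t$, we must have $\alpha=0$. Fixing now $u_0\in S_X$, the orbit $\{\e^{tP}u_0:t\in\R\}=T\big(\{R_{\beta t}(T^{-1}u_0):t\in\R\}\big)$ is the $T$-image of a Euclidean circle centred at $0$, hence an ellipse centred at $0$, and it is contained in $S_X$. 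Since this ellipse bounds a convex body $K$ with $0$ in its interior and $\partial K\subseteq\partial B_X$, comparing the Minkowski functionals of $K$ and $B_X$ along each ray emanating from $0$ forces $K=B_X$. Therefore $B_X$ is an ellipse, $\|\cdot\|$ is induced by an inner product, and $X$ is the two-dimensional real Euclidean space.

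I expect the second step to be where the real content lies — recognising that $v(P)=0$ forces $\e^{tP}$ to be an isometry, via the identification of $D_z\|\cdot\|$ with the supporting functional and the differentiation of $t\mapsto\|\e^{tP}u_0\|$ — while the remaining steps are routine linear algebra and convexity. As an alternative to the final ray-by-ray comparison of convex bodies, one may finish topologically: a closed subset of the Jordan curve $S_X$ that is itself a Jordan curve must be all of $S_X$.
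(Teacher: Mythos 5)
Your proof is correct, but it takes a genuinely different route from the paper's. The paper works in an Auerbach basis, uses Theorem~\ref{thm:initial-facts} to write the unique norming functional of $(x,y)$ explicitly in terms of the matrix entries of the operator (reduced to $T(x,y)=(by,cx)$), identifies that functional with the gradient of the norm, and then \emph{integrates} the resulting first-order system to obtain the closed formula $\|(x,y)\|=\e^{M}(by^2-cx^2)^{1/2}$, which is normalized to the Euclidean norm. You instead extract from Theorem~\ref{thm:initial-facts} only smoothness and the identity $D_z\|\cdot\|(Pz)=0$, differentiate $t\mapsto\|\e^{tP}u_0\|$ to conclude that the flow $\e^{tP}$ is a one-parameter group of isometries, and then use the absence of real eigenvalues (correctly deduced from $Q$ vanishing only at the origin), the real Jordan form, and a boundedness/periodicity argument to force the orbits to be ellipses filling out $S_X$. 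All the steps check out: the chain rule is legitimate because smoothness in finite dimension gives Fr\'echet differentiability off the origin, and the final identification of the orbit ellipse with $S_X$ by rays (or by the Jordan-curve argument) is sound. In effect you re-derive, in dimension two, the equivalence between $n(X)=0$ and the existence of a nontrivial one-parameter group of isometries --- precisely the Rosenthal-style route the paper cites as the ``well known'' proof and deliberately sidesteps; the paper's integration buys an explicit formula for the norm (a computation it reuses in Theorem~\ref{th-norm-beta}), whereas your argument is coordinate-free and makes the geometric mechanism transparent.
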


\begin{proof}
Let $e_1, e_2\in S_X$ and $e_1^*, e_2^*\in S_{X^*}$ be so that
$e_i^*(e_j)=\delta_{ij}$ for $i,j\in\{1,2\}$ (the existence of such
elements is guaranteed by \cite[Theorem~II.2.2]{Singer}). We fix a
linear operator $T$ with $v(T)=0$ and we write it in the basis
$\{e_1, e_2\}$:
$$
T(x,y)=(ax+by, cx+dy) \qquad \big((x,y)\in X\big).
$$
Since $e_i^*(Te_i)=0$ for $i=1,2$ we obtain $a=d=0$. Given an
arbitrary nonzero $(x,y)\in X$, we use
Theorem~\ref{thm:initial-facts} to get that the unique linear
functional which norms $(x,y)$ is given by
$$
\left(\frac{-cx\|(x,y)\|}{by^2-cx^2}\,,
\frac{by\|(x,y)\|}{by^2-cx^2}\right)
$$
but such a functional must coincide with the differential of
the norm, implying that
$$
\frac{\partial\|\cdot\|}{\partial x}(x,y)=
\frac{-cx\|(x,y)\|}{by^2-cx^2} \qquad \text{and} \qquad
\frac{\partial\|\cdot\|}{\partial y}(x,y)=
\frac{by\|(x,y)\|}{by^2-cx^2}.
$$
We rewrite the first equation as follows:
$$
\frac{1}{\|(x,y)\|}\frac{\partial\|\cdot\|}{\partial
x}(x,y)=\frac{-cx}{by^2-cx^2}
$$
and we integrate it with respect to $x$, obtaining
$$
\log \|(x,y)\|= \frac{1}{2}\log(by^2-cx^2) + f(y)
$$
for some differentiable function $f$. Differentiating now with
respect to $y$ we get
$$
\frac{1}{\|(x,y)\|}\frac{\partial\|\cdot\|}{\partial
y}(x,y)=\frac{by}{by^2-cx^2} + f^{\prime}(y)
$$
so $f^{\prime}(y)=0$ and $f(y)$ is constant, say $M$. Therefore, we
can write
$$
\|(x,y)\|= \e^M (by^2-cx^2)^{\frac{1}{2}}
$$
and deduce that $b>0$ and $c<0$. Now, since
$\|e_1\|=\|e_2\|=1$, we get $1 = \e^Mb^{\frac{1}{2}} =
\e^M(-c)^{\frac{1}{2}}$ which yields that
\begin{equation*}
\|(x,y)\|= \e^M (by^2-cx^2)^{\frac{1}{2}}=\e^M
b^{\frac{1}{2}}(x^2+y^2)^{\frac{1}{2}}=(x^2+y^2)^{\frac{1}{2}} \ .\qedhere
\end{equation*}
\end{proof}

There are more two-dimensional spaces for which the polynomial
numerical index of order $3$ is zero since we already know that
$n^{(3)}(\ell_4^2)=0$. However, we are able to completely
describe absolute normalized and symmetric norms with
polynomial numerical index of order $3$ equal to zero showing,
in particular, that all of them come from a polynomial. We will
see later that the hypothesis of symmetry is necessary.

\begin{theorem}\label{th-norm-beta}
Let $X=(\R^2,\|\cdot\|)$ be a two-dimensional Banach space
satisfying that $n^{(3)}(X)=0$ with $\|\cdot\|$ being a
normalized absolute symmetric norm. Then, there is $\beta\in
[0,3]$ so that
$$
\|(x,y)\|=\left(x^4+2\beta x^2y^2+y^4\right)^\frac{1}{4} \qquad \big((x,y)\in
X\big).
$$
In particular, the fourth power of the norm of $X$ is a
polynomial.
\end{theorem}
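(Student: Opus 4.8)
The plan is to leverage Theorem~\ref{thm:initial-facts} together with the rigidity coming from the hypotheses on the norm. First I would dispose of a degenerate case: if $n^{(1)}(X)=0$, then Corollary~\ref{cor:Hilbert-space} gives that $X$ is the Euclidean plane, so $\|(x,y)\|^4=(x^2+y^2)^2=x^4+2x^2y^2+y^4$, which is the asserted formula with $\beta=1$. Otherwise $n^{(2)}(X)\neq0$ as well: indeed $k_0:=\min\{k:n^{(k)}(X)=0\}$ is odd by Theorem~\ref{thm:initial-facts}(b), hence equals $1$ or $3$, and it is not $1$; so $k_0=3$ since $n^{(3)}(X)=0$. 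Fix $P=(P_1,P_2)\in\Pol[3]{X}$ with $v(P)=0$; by Theorem~\ref{thm:initial-facts}(d) it is unique up to a real scalar.

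The crucial step is to pin down $P$ using the symmetries of the norm. Since $\|\cdot\|$ is absolute and symmetric, the maps $\sigma(x,y)=(-x,y)$ and $\tau(x,y)=(y,x)$ are surjective isometries of $X$, so by~\eqref{eq-hom-num-range} the polynomials $\sigma^{-1}\circ P\circ\sigma$ and $\tau^{-1}\circ P\circ\tau$ again have vanishing numerical radius; Theorem~\ref{thm:initial-facts}(d) thus provides reals $\lambda_\sigma,\lambda_\tau$ with $\sigma^{-1}\circ P\circ\sigma=\lambda_\sigma P$ and $\tau^{-1}\circ P\circ\tau=\lambda_\tau P$, and $\sigma^2=\tau^2=\Id$ forces $\lambda_\sigma,\lambda_\tau\in\{1,-1\}$. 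Writing $P_1,P_2$ as general homogeneous cubics and imposing these two identities forces each of $P_1,P_2$ to be a linear combination of just two monomials; for the three sign patterns other than $\lambda_\sigma=\lambda_\tau=-1$, the polynomial $Q(x,y)=yP_1(x,y)-xP_2(x,y)$ then acquires a linear factor ($x$, $y$, $x-y$, or $x+y$), contradicting Theorem~\ref{thm:initial-facts}(a). In the surviving case, after replacing $P$ by the scalar multiple that makes the $y^3$-coefficient of $P_1$ equal to $1$ (licit by part (d), and this coefficient is nonzero, for otherwise $Q$ would vanish on the coordinate axes), one is left with
$$
P_1(x,y)=\beta x^2y+y^3,\qquad P_2(x,y)=-\bigl(x^3+\beta xy^2\bigr),\qquad Q(x,y)=x^4+2\beta x^2y^2+y^4
$$
for some $\beta\in\R$, and the requirement that $Q$ vanish only at $(0,0)$ already forces $\beta>-1$.

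Next I would recover the norm by integration. By Theorem~\ref{thm:initial-facts}(c) and smoothness, the derivative of $\|\cdot\|$ at any $(x,y)\neq(0,0)$ equals $\dfrac{\|(x,y)\|}{Q(x,y)}\bigl(-P_2(x,y),\,P_1(x,y)\bigr)$; for the $P$ and $Q$ above one checks directly that $\bigl(-P_2,\,P_1\bigr)=\tfrac14\nabla Q$, so dividing by $\|(x,y)\|$ gives $\nabla\log\|(x,y)\|=\tfrac14\nabla\log Q(x,y)$ on the connected open set $\R^2\setminus\{(0,0)\}$. Hence $\|(x,y)\|^4=c\,Q(x,y)$ for some constant $c>0$, and evaluating at $(1,0)$, where $\|(1,0)\|=Q(1,0)=1$ by normalization, forces $c=1$. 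Therefore $\|(x,y)\|=\bigl(x^4+2\beta x^2y^2+y^4\bigr)^{1/4}$, and in particular the fourth power of the norm is a polynomial.

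It remains to sharpen $\beta>-1$ to $\beta\in[0,3]$, and here the hypothesis that $\|\cdot\|$ is a genuine norm enters: along the Euclidean unit circle one has $Q(\cos\theta,\sin\theta)=1+\tfrac{\beta-1}{2}\sin^2 2\theta$, and demanding that the polar curve $r(\theta)=Q(\cos\theta,\sin\theta)^{-1/4}$ bound a convex region---equivalently, that $(x^4+2\beta x^2y^2+y^4)^{1/4}$ be convex, which is exactly what is verified in the appendix---reduces, after simplification, to the inequalities $\beta\geq0$ and $(3-\beta)(\beta+1)\geq0$, that is, $\beta\in[0,3]$. The main obstacle I anticipate lies in the bookkeeping of the second paragraph: one must run through the four sign choices for $(\lambda_\sigma,\lambda_\tau)$, track how they constrain the eight coefficients of $(P_1,P_2)$, exhibit in each of the three bad cases the explicit linear factor of $Q$ contradicting part (a), and verify that the concluding rescaling of $P$ leaves the formula for the norming functional---hence the integration---untouched.
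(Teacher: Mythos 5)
Your proposal is correct and follows essentially the same route as the paper: conjugate $P$ by the two reflections coming from absoluteness and symmetry, use Theorem~\ref{thm:initial-facts} to reduce $P$ to the one-parameter family with $Q(x,y)=x^4+2\beta x^2y^2+y^4$, integrate the identity between the norming functional and the derivative of the norm to obtain $\|\cdot\|^4=Q$, and invoke Proposition~\ref{prop:Apendix-beta} to conclude $\beta\in[0,3]$. The only (harmless) variations are that you eliminate the wrong signs of $\lambda_\sigma,\lambda_\tau$ by exhibiting linear factors of $Q$ rather than by evaluating $Q$ at $(x,\pm x)$ as the paper does, and that you integrate via the observation $(-P_2,P_1)=\tfrac14\nabla Q$ instead of the paper's explicit antiderivative computation with the auxiliary function $f(y)$.
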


\begin{proof}
We can assume that $n^{(2)}(X)\neq0$ since otherwise $X$ is a
Hilbert space and the result holds with $\beta=1$. We fix
$P=(P_1,P_2)\in \Pol[3]{X}$ with $v(P)=0$ and we consider the
associated scalar polynomial $Q(x,y)=yP_1(x,y)-xP_2(x,y)$ which
only vanishes at $(0,0)$ by Theorem~\ref{thm:initial-facts}.
Hence we can assume without loss of generality that $Q>0$ on
$\R^2\setminus \{(0,0) \}$. Next, the norm being absolute, the
operator $U=\begin{pmatrix} 1 & 0
\\ 0 & -1
\end{pmatrix}$ is a surjective isometry and so the polynomial
$(R_1,R_2)=U^{-1}\circ P \circ U$, which is given by
$$
\bigl(R_1(x,y),R_2(x,y)\bigr)= \bigl(P_1(x,-y),-P_2(x,-y)\bigr) \qquad
\bigl((x,y)\in X\bigr),
$$
satisfies
$$
v(R_1,R_2)=0 \qquad \text{and} \qquad \|(R_1,R_2)\|=\|P\|
$$
by \eqref{eq-hom-num-range}. Thus,
Theorem~\ref{thm:initial-facts} tells us that there is
$\lambda\in\R$ with $|\lambda|=1$ so that
$$
P_1(x,-y)=\lambda P_1(x,y) \qquad \text{and} \qquad
P_2(x,-y)=-\lambda P_2(x,y)
$$
for every $(x,y)\in X$. Moreover, we have that $\lambda=-1$. Indeed,
it suffices to take a non-zero $x\in\R$ and to observe that
$$
Q(x,-x)=-xP_1(x,-x)-xP_2(x,-x)=-\lambda Q(x,x)
$$
which implies $\lambda=-1$ since $Q>0$ on $\R^2\setminus
\{(0,0) \}$. Hence, for every $(x,y)\in X$ we get
\begin{equation}\label{eq:absoluteness}
P_1(x,-y)=-P_1(x,y) \qquad \text{and} \qquad P_2(x,-y)=P_2(x,y).
\end{equation}
Analogously, the norm being symmetric, the operator
$V=\begin{pmatrix} 0 & 1 \\ 1 & 0
\end{pmatrix}$ is a surjective isometry and so the polynomial
$(S_1,S_2)=V^{-1}\circ P \circ V$, which is given by
$$
\bigl(S_1(x,y),S_2(x,y)\bigr)= \bigl(P_2(y,x),P_1(y,x)\bigr) \qquad
\bigl((x,y)\in X\bigr),
$$
satisfies
$$
v(S_1,S_2)=0 \qquad \text{and} \qquad \|(S_1,S_2)\|=\|P\|.
$$
by \eqref{eq-hom-num-range}. Therefore, using again
Theorem~\ref{thm:initial-facts} and the fact that $Q>0$ on
$\R^2\setminus \{(0,0) \}$, we deduce that
\begin{equation*}
P_2(x,y)=-P_1(y,x) \qquad \big((x,y)\in X\big).
\end{equation*}
Therefore, if we write $P_1(x,y)=a x^3+bx^2y+cxy^2+dy^3$ for some
$a,b,c,d\in\R$, we obtain $P_2(x,y)=-dx^3-cx^2y-bxy^2-ay^3$.
Further, using \eqref{eq:absoluteness} we deduce that
$$
P_1(x,y)=bx^2y+dy^3 \qquad \text{and} \qquad P_2(x,y)=-dx^3-bxy^2
$$
for every $(x,y)\in X$. This, together with
Theorem~\ref{thm:initial-facts}, tells us that the linear
functional which norms an arbitrary non-zero $(x,y)\in X$ is
given by
$$
\left(\frac{(dx^3+bxy^2)\|(x,y)\|}{dx^4+2bx^2y^2+dy^4}\,,
\frac{(bx^2y+dy^3)\|(x,y)\|}{dx^4+2bx^2y^2+dy^4}\right)
$$
thus, we have that
$$
\frac{1}{\|(x,y)\|}\frac{\partial\|\cdot\|}{\partial x}(x,y)=
\frac{dx^3+bxy^2}{dx^4+2bx^2y^2+dy^4} \quad \text{and} \quad
\frac{1}{\|(x,y)\|}\frac{\partial\|\cdot\|}{\partial y}(x,y)=
\frac{bx^2y+dy^3}{dx^4+2bx^2y^2+dy^4}\,.
$$
Integrating the first equation with respect to $x$ we obtain
$$
\log \|(x,y)\|= \frac{1}{4}\log(dx^4+2bx^2y^2+dy^4) + f(y) \qquad (x,y\in \R)
$$
for some differentiable function $f$. Differentiating now with
respect to $y$ we get
$$
\frac{1}{\|(x,y)\|}\frac{\partial\|\cdot\|}{\partial
y}(x,y)=\frac{bx^2y+dy^3}{dx^4+2bx^2y^2+dy^4} + f^{\prime}(y) \qquad (x,y\in \R)
$$
so $f^{\prime}(y)=0$ and $f(y)$ is constant, say $C$. Therefore, we
can write
$$
\|(x,y)\|= \e^C (dx^4+2bx^2y^2+dy^4)^{\frac{1}{4}} \qquad (x,y\in \R).
$$
Now, since $\|(1,0)\|=\|(0,1)\|=1$, $d>0$ and
$\e^Cd^{\frac14}=1$ so, calling $\beta=b\e^{4C}$, we have
$$
\|(x,y)\|= (x^4+2\beta x^2y^2+y^4)^{\frac{1}{4}} \qquad (x,y\in \R).
$$
Finally, this formula defines a norm if and only if $\beta\in
[0,3]$ as shown in Proposition~\ref{prop:Apendix-beta}.
\end{proof}

The next example shows that the hypothesis of symmetry of the norm
in the above theorem cannot be dropped.

\begin{example}\label{ex:raro-dim3}
{\slshape There are normalized absolute norms $\|\cdot\|$ on
$\R^2$ such that the spaces $X=(\R^2,\|\cdot\|)$ satisfy
$n^{(3)}(X)=0$ and $\|\cdot\|^{\ell}$ is not a polynomial for
any positive number $\ell$.\ } Indeed, for any irrational $0<a
<1$, we consider the function $\|\cdot\|_a$ defined by
$$
\|(x,y)\|_a=\left(x^2+\left(\tfrac{a}{1+a}\right)^{1+a}y^2\right)^\frac{-a}{2}
\left(x^2+\left(\tfrac{a}{1+a}\right)^{a}y^2\right)^\frac{1+a}{2} \qquad
\big((x,y)\in\R^2\setminus\{(0,0)\}\big)
$$
and $\|(0,0)\|_a=0$, which is a norm as shown in
Proposition~\ref{prop:apendix-no-simetrica} and obviously
satisfies that $\|\cdot\|_a^{\ell}$ is not a polynomial for any
positive number $\ell$. We then consider $X=(\R^2,\|\cdot\|_a)$
and the polynomial $P=(P_1,P_2)\in\Pol[3]{X}$ given by
\begin{equation*}
P(x,y)=\left(\left(\tfrac{a}{1+a}\right)^{a}\left(\tfrac{1+2a}{1+a}\right)x^2y+
\left(\tfrac{a}{1+a}\right)^{1+2a}y^3,-x^3\right) \qquad
\big((x,y)\in X\big).
\end{equation*}
Since $\|\cdot\|_a$ is differentiable on $S_X$, for $(x,y)\in
S_{X}$, the only functional $(x^*,y^*)\in S_{X^*}$ norming
$(x,y)$ is given by $\left(\frac {\partial\|\cdot\|_a}{\partial
x}(x,y)\,,\frac {\partial\|\cdot\|_a}{\partial y}(x,y)\right)$.
It is easy to check that
\begin{align*}
\frac{\partial\|\cdot\|_a}{\partial
x}(x,y)&=x^3\,A(x,y,a) \\
\frac{\partial\|\cdot\|_a}{\partial
y}(x,y)& =\left(\left(\tfrac{a}{1+a}\right)^{a}\left(\tfrac{1+2a}{1+a}\right)x^2y+
\left(\tfrac{a}{1+a}\right)^{1+2a}y^3\right)A(x,y,a)
\end{align*}
where
$$
A(x,y,a)=
\left(x^2+\left(\tfrac{a}{1+a}\right)^{1+a}y^2\right)^{\frac{-a}{2}-1}
\left(x^2+\left(\tfrac{a}{1+a}\right)^{a}y^2\right)^{\frac{1+a}{2}-1}\,.
$$
Therefore,  $x^*P_1(x,y)+y^*P_2(x,y)=0$ which implies $v(P)=0$.
\end{example}

For higher order, there are examples of absolute normalized and
symmetric norms with polynomial numerical indices equal to zero
which do not come from polynomials.

\begin{example}\label{ex:dim5}
{\slshape For every positive integer $m\geq 3$, there are
absolute normalized and symmetric norms $\|\cdot\|_{m,\theta}$
such that the spaces $X_{m,\theta}=(\R^2,
\|\cdot\|_{m,\theta})$ satisfy $n^{(2m-1)}(X_{m,\theta})=0$ and
$\|\cdot\|_{m,\theta}^{2\ell}$ is not a polynomial for any
positive number $\ell$.\ } Indeed, let $\|\cdot\|_{m,\theta}$
be defined by
$$
\|(x,y)\|_{m,\theta}=\left(x^2+y^2\right)^\frac{\theta}{2}
\left(x^{2m-2}+y^{2m-2}\right)^\frac{1-\theta}{2m-2}
\qquad \big((x,y)\in\R^2\big)
$$
where $\theta\in [0,1]$. This formula defines a norm as shown
in Proposition~\ref{prop:apendix-normas-p}. To prove that
$n^{(2m-1)}(X_{m,\theta})=0$, we define the polynomial
$P=(P_1,P_2)\in\Pol[2m-1]{X_{m,\theta}}$ by
\begin{align*}
P_1(x,y)&=\theta y \left(x^{2m-2}+
y^{2m-2}\right)+ (1-\theta)y^{2m-3}
\left(x^2+y^2\right)\\
P_2(x,y)&=- \theta x \left(x^{2m-2}+
y^{2m-2}\right) - (1-\theta)x^{2m-3}
\left(x^2+y^2\right)
\end{align*}
and we show that $v(P)=0$. Since $\|\cdot\|_{m,\theta}$ is
differentiable on $S_{X_{m,\theta}}$, for $(x,y)\in
S_{X_{m,\theta}}$ the only functional $(x^*,y^*)\in
S_{X_{m,\theta}^*}$ norming $(x,y)$ is given by $\left(\frac
{\partial\|\cdot\|_{m,\theta}}{\partial x}(x,y)\,,\frac
{\partial\|\cdot\|_{m,\theta}}{\partial y}(x,y)\right)$ and,
therefore,
\begin{align*}
x^*&=\left[ \theta x \left(x^{2m-2}+
y^{2m-2}\right) + (1-\theta)x^{2m-3}
\left(x^2+y^2\right)\right] B(x,y,m,\theta)\\
y^*&=\left[\theta y \left(x^{2m-2}+
y^{2m-2}\right)+ (1-\theta)y^{2m-3}
\left(x^2+y^2\right)\right] B(x,y,m,\theta)
\end{align*}
where
$$
B(x,y,m,\theta)=\left(x^2+y^2\right)^{\frac{\theta}{2}-1}\left(x^{2m-2}+
y^{2m-2}\right)^{\frac{1-\theta}{2m-2}-1}\,.
$$
Now, it is routine to check that $x^*P_1(x,y)+y^*P_2(x,y)=0$.
Finally, if $\theta\in [0,1]$ is chosen irrational, then
$\|\cdot\|_{m,\theta}^{2\ell}$ is not a polynomial for any
positive integer $\ell$.
\end{example}

\section{Appendix: Some norms in the plane}\label{sec:appendix}
The aim of this last section is to justify that some formulae
appearing in the past section are really norms. We start with
the norms given in Theorem~\ref{th-norm-beta} for which the
justification is direct.

\begin{prop}\label{prop:Apendix-beta}
For $\beta\in \R$, the formula
$$
\|(x,y)\|=\left(x^4+2\beta x^2y^2+y^4\right)^\frac{1}{4} \qquad \big((x,y)\in
\R^2\big)
$$
defines a norm in $\R^2$ if and only if $\beta\in [0,3]$.
\end{prop}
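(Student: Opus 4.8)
The plan is to treat the two implications separately and reduce everything to a one-variable convexity problem via the absoluteness and symmetry built into the formula. First note the map $N_\beta(x,y)=(x^4+2\beta x^2y^2+y^4)^{1/4}$ is clearly positively homogeneous of degree one, is symmetric in $x,y$, satisfies $N_\beta(|x|,|y|)=N_\beta(x,y)$, and is strictly positive off the origin precisely when $x^4+2\beta x^2y^2+y^4>0$ for all $(x,y)\neq(0,0)$; writing $t=x^2/y^2\geq 0$ (or the symmetric substitution), this quartic in $x,y$ is positive iff $t^2+2\beta t+1>0$ for all $t\geq 0$, which holds iff $\beta>-1$. So for $\beta\leq -1$ the formula does not even give a positive function, and in particular is not a norm; this disposes of part of the "only if" direction cheaply. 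It remains to decide, among $\beta>-1$, exactly when $N_\beta$ satisfies the triangle inequality, and to show this is the interval $[0,3]$.

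For the "if" direction, I would fix $\beta\in[0,3]$ and prove $N_\beta$ is convex on $\R^2$ (which, together with homogeneity and positivity, is equivalent to the triangle inequality, hence to being a norm). By homogeneity and absoluteness it suffices to test convexity along the unit sphere, equivalently to show that the set $\{N_\beta\leq 1\}$ is convex; by symmetry and absoluteness one may restrict to the first quadrant and parametrize the boundary curve $x^4+2\beta x^2y^2+y^4=1$. The cleanest route is to compute the Hessian of $F(x,y)=x^4+2\beta x^2y^2+y^4$ and of $N_\beta=F^{1/4}$ directly: a standard computation gives $\nabla^2 N_\beta = F^{1/4-2}\big(\tfrac14 F\,\nabla^2F - \tfrac{3}{16}\,\nabla F\otimes\nabla F\big)$, so positive semidefiniteness of $\nabla^2N_\beta$ is equivalent to positive semidefiniteness of the quadratic form $4F(h^\top\nabla^2F\,h)-3(\nabla F\cdot h)^2$ for all $h$. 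Expanding this, using $F$ homogeneous of degree $4$ so that $\nabla F\cdot(x,y)=4F$ and $(x,y)^\top\nabla^2F(x,y)=12F$, one reduces to checking a one-parameter family of $2\times2$ positive-semidefiniteness conditions; after the substitution $t=x^2/(x^2+y^2)\in[0,1]$ these become explicit polynomial inequalities in $t$ whose coefficients depend on $\beta$, and I expect the inequalities to hold exactly for $\beta\in[0,3]$, with equality forced at the endpoints (at $\beta=3$ note $x^4+6x^2y^2+y^4$ is close to $(x^2+y^2)^2$ plus $4x^2y^2$, and the degeneracy occurs along the diagonal).

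For the remaining "only if" direction, i.e. $\beta>3$ and $-1<\beta<0$ both fail, I would exhibit explicit failures of the triangle inequality rather than redo the Hessian computation. For $-1<\beta<0$: evaluate at $u=(1,0)$ and $v=(0,1)$, where $N_\beta(u)=N_\beta(v)=1$ but $N_\beta(u+v)=(2+2\beta)^{1/4}$, which exceeds $2=N_\beta(u)+N_\beta(v)$ would require $2+2\beta>16$, false — so that pair does not work; instead test $u=(1,0)$, $v=(1,t)$ for small $t>0$, or more systematically look at the curvature of the level curve near the axis: as a function on the first-quadrant boundary, convexity of $\{N_\beta\leq1\}$ fails near $(1,0)$ when $\beta<0$ because the curve bulges outward there (the sign of the relevant curvature expression is governed by $\beta$). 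For $\beta>3$: the symmetric point on the level curve is $(s,s)$ with $s=(2+2\beta)^{-1/4}$, and one checks that for $\beta>3$ the level curve is non-convex in a neighbourhood of the diagonal — concretely, compare the midpoint of two nearby symmetric boundary points across the diagonal with the boundary, or simply note that the curvature of $\{F=1\}$ changes sign at $\beta=3$ along the diagonal direction. The main obstacle is the bookkeeping in the "if" direction: organizing the Hessian/level-curve convexity computation so that the constraint $0\leq\beta\leq3$ emerges transparently rather than through a messy case analysis; I would push it through the substitution $t=x^2/(x^2+y^2)$ and reduce to a single quadratic-in-$t$ discriminant condition, which should collapse the two endpoint constraints into the inequalities $\beta\geq0$ and $\beta\leq3$ cleanly.
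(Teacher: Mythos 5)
Your overall strategy (reduce the norm property to convexity of $N_\beta=F^{1/4}$ with $F(x,y)=x^4+2\beta x^2y^2+y^4$ and test the Hessian) is viable, and the computation you postpone does in fact close up: since the matrix $M=4F\,\nabla^2F-3\,\nabla F\otimes\nabla F$ annihilates $(x,y)$ by Euler's identities, $M$ is a scalar multiple of $(y,-x)\otimes(y,-x)$, and a direct calculation gives $M_{11}=16y^2\bigl(\beta x^4+(3-\beta^2)x^2y^2+\beta y^4\bigr)$; hence positive semidefiniteness amounts to $\beta t^2+(3-\beta^2)t+\beta\geq 0$ for all $t\geq 0$, which holds precisely for $\beta\in[0,3]$ (take $t=0$ and $t=1$ for necessity; for sufficiency all coefficients are nonnegative when $\beta^2\leq 3$, and for $\sqrt3<\beta\leq3$ the discriminant $(\beta-3)(\beta-1)(\beta+1)(\beta+3)$ is nonpositive). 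But as written your argument has genuine gaps: the decisive inequalities in the ``if'' direction are only \emph{expected} to hold; your first candidate counterexample for $-1<\beta<0$ (the pair $(1,0)$, $(0,1)$) is acknowledged to fail and no replacement is actually verified; the non-convexity for $\beta>3$ is asserted, not shown; and you would still need to justify that pointwise positive semidefiniteness of the Hessian on $\R^2\setminus\{0\}$ yields global convexity despite the singularity at the origin (the paper isolates exactly this point as Lemma~\ref{lemma:apendix-union}).

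For comparison, the paper avoids all Hessian computations. For $0\leq\beta\leq1$ it writes $\|(x,y)\|=\bigl\|\bigl(\beta^{1/4}\|(x,y)\|_2,(1-\beta)^{1/4}\|(x,y)\|_4\bigr)\bigr\|_4$, a norm of norms. For $\beta<0$ it exhibits two unit vectors $(1,\pm\delta)\,(1+2\beta\delta^2+\delta^4)^{-1/4}$ with $0<\delta^2<-2\beta$ whose midpoint has norm $(1+2\beta\delta^2+\delta^4)^{-1/4}>1$; this is exactly the explicit witness your $-1<\beta<0$ case is missing (and it also covers $\beta\leq-1$). For $\beta\geq1$ the linear substitution $x=(u+v)(2+2\beta)^{-1/4}$, $y=(u-v)(2+2\beta)^{-1/4}$ carries $N_\beta$ to $N_{g(\beta)}$ with $g(\beta)=\frac{3-\beta}{1+\beta}$, and since $g([1,3])=[0,1]$ and $g$ maps $(3,+\infty)$ into $(-1,0)$, both remaining cases reduce to the two already settled; this makes the endpoint $3$ appear structurally (as the preimage of $0$ under $g$) rather than through a discriminant. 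Either route works, but yours needs the computations actually carried out before it is a proof.
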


\begin{proof}
We start by observing that for $0\leq\beta\leq1$ we can write
$$
\|(x,y)\|=\left(\beta(x^2+y^2)^2
+(1-\beta)(x^4+y^4)\right)^{\frac{1}{4}}=
\left\|\left(\beta^{\frac14}\|(x,y)\|_2\,,(1-\beta)^{\frac14}
\|(x,y)\|_4\right)\right\|_4
$$
and so it defines a norm on $\R^2$. In case $\beta<0$, it is
easy to check that the set
$$
A=\left\{(x,y)\in \R^2 \ : \ x^4+2\beta x^2y^2+y^4\leq 1\right\}
$$
is not convex and thus $\|\cdot\|$ is not a norm. Indeed, fix
$0<\delta<(-2\beta)^{\frac12}$ and observe that the points
$$
\textstyle
\left(\frac{1}{(1+2\beta\delta^2+\delta^4)^\frac14}\,,
\frac{\delta}{(1+2\beta\delta^2+\delta^4)^\frac14}\right)\qquad\text{and}
\qquad\left(\frac{1}{(1+2\beta\delta^2+\delta^4)^\frac14}\,,
\frac{-\delta}{(1+2\beta\delta^2+\delta^4)^\frac14}\right)
$$
belong to $A$ while their midpoint
$\left(\frac{1}{(1+2\beta\delta^2+\delta^4)^\frac14}\,,
0\right)$ does not.

Finally, for $\beta \geq 1$, we consider the change of
variables given by
$$
x=\frac{u+v}{(2+2\beta)^\frac14}\qquad \text{and} \qquad
y=\frac{u-v}{(2+2\beta)^\frac14}\, ;
$$
we observe that
$$
(x^4+2\beta x^2y^2+y^4)^\frac{1}{4}=
\left(u^4+2\,\tfrac{3-\beta}{1+\beta}\,u^2v^2+v^4\right)^\frac14
$$
and that the mapping $g:[1,+\infty[\longrightarrow]-1,1]$ given
by $g(\beta)=\frac{3-\beta}{1+\beta}$ satisfies
$$
g([1,3])=[0,1] \qquad  \text{and} \qquad
g(]3,+\infty[)=]-1,0[\,.
$$
So the remaining cases $1\leq \beta\leq 3$ and $3<\beta$ are
covered respectively by the previous ones $0\leq\beta\leq 1$
and $\beta<0$.
\end{proof}

The study of the functions appearing in Examples \ref{ex:raro-dim3}
and \ref{ex:dim5} is more difficult and requires some tricky
arguments. We would like to thank Vladimir Kadets for providing us
with some crucial ideas.

We start with some folklore lemmata on convex functions. Recall
that a function $f:A\longrightarrow \R$ on a convex set $A$ is
said to be \emph{convex} if
$$
f(\lambda\,x + (1-\lambda)\,y)\leq \lambda\,f(x) + (1-\lambda)\,f(y)
\qquad \bigl(x,y\in A,\ \lambda\in[0,1]\bigr).
$$
A subset $C$ of a vector space is said to be a \emph{cone} if
$\alpha\,x + \beta\,y\in C$ for every $x,y\in C$ and every
$\alpha,\beta\in \R^+$. If $f:C\longrightarrow \R$ is positive
homogeneous, then $f$ is convex if and only if $f$ is
\emph{sublinear}, i.e.\
$$
f(x+y)\leq f(x) + f(y) \qquad (x,y\in A).
$$

\begin{lemma}\label{lemma:apendix-1}
Let $(X,\|\cdot\|)$ be a normed space, $C\subseteq X$ a cone
and let $f:C\longrightarrow \R$ be a positive homogeneous
function. If
$$
f(\lambda\,x + (1-\lambda)\,y)\leq \lambda\,f(x) + (1-\lambda)\,f(y)
\qquad \bigl(x,y\in C\cap S_X,\ \lambda\in[0,1]\bigr),
$$
then $f$ is convex on  $C$.
\end{lemma}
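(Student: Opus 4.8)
The plan is to reduce the assertion to \emph{sublinearity} and then to upgrade the sphere‑restricted hypothesis to all of $C$ by a rescaling argument. Since $f$ is positive homogeneous on the cone $C$, the remark preceding the lemma says that $f$ is convex on $C$ if and only if $f$ is sublinear on $C$, i.e.\ $f(x+y)\leq f(x)+f(y)$ for all $x,y\in C$. So it suffices to prove this inequality. First I would dispose of the degenerate case: if $0\in C$, positive homogeneity gives $f(0)=f(t\cdot 0)=t\,f(0)$ for every $t>0$, hence $f(0)=0$, and the sublinearity inequality is then trivial whenever $x=0$ or $y=0$.

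Next I would fix $x,y\in C\setminus\{0\}$ and observe that $x+y$ is a positive multiple of a convex combination of two unit vectors of $C$. Concretely, set $s=\|x\|+\|y\|>0$, $u=x/\|x\|$, $v=y/\|y\|$ and $\mu=\|x\|/s\in(0,1)$; since $C$ is a cone, $u,v\in C\cap S_X$, and
\[
s\bigl(\mu\,u+(1-\mu)\,v\bigr)=\|x\|\,u+\|y\|\,v=x+y .
\]
Applying the hypothesis with the points $u,v\in C\cap S_X$ and $\lambda=\mu$ gives $f\bigl(\mu\,u+(1-\mu)\,v\bigr)\leq \mu\,f(u)+(1-\mu)\,f(v)$. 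Then I would multiply by $s$ and use positive homogeneity twice — once to pull $s$ out of $f$, and once in the form $\|x\|\,f(u)=\|x\|\,f\bigl(x/\|x\|\bigr)=f(x)$ (similarly for $y$) — to obtain
\[
f(x+y)=s\,f\bigl(\mu\,u+(1-\mu)\,v\bigr)\leq s\mu\,f(u)+s(1-\mu)\,f(v)=\|x\|\,f(u)+\|y\|\,f(v)=f(x)+f(y).
\]
This establishes sublinearity, hence convexity, on $C$.

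There is no genuine obstacle here; the only points needing a little care are the behaviour at the origin (handled by $f(0)=0$) and the verification that the normalized vectors $u$ and $v$ really belong to $C\cap S_X$ so that the hypothesis is applicable — which is precisely where the assumption that $C$ is a cone enters. Everything else is the elementary bookkeeping in the two displays above.
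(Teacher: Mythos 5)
Your proof is correct and follows essentially the same route as the paper's: reduce to sublinearity via positive homogeneity, write $x+y$ as $(\|x\|+\|y\|)$ times the convex combination of the unit vectors $x/\|x\|$ and $y/\|y\|$ with weight $\mu=\|x\|/(\|x\|+\|y\|)$, and apply the hypothesis on $C\cap S_X$. The only difference is cosmetic — you spell out $f(0)=0$ where the paper simply calls the degenerate case trivial.
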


\begin{proof}
Since $f$ is positive homogeneous, it is enough to show that it
is sublinear. If $x,y\in C$ are non-null elements, then
$x/\|x\|$ and $y/\|y\|$ belong to $C\cap S_X$ and so
$$
\tfrac{1}{\|x\| + \|y\|}\,f(x+y)=f\left(\tfrac{\|x\|}{\|x\|+\|y\|}\,\frac{x}{\|x\|} +
\tfrac{\|y\|}{\|x\|+\|y\|}\,\frac{y}{\|y\|} \right) \leq \tfrac{1}{\|x\| + \|y\|}\,
\bigl( f(x) + f(y)\bigr).
$$
If $x=0$ or $y=0$, the result is trivial.
\end{proof}

It is well-known (see \cite[Proposition~2.2]{Tuy}, for instance) that a twice differentiable
function $f:A\longrightarrow \R$ defined on an open convex subset $A$ of $\R^d$ is convex if and
only if the Hessian matrix of $f$ is semi-definite positive. With this in mind, the following
result is completely evident.

\begin{lemma}\label{lemma:apendix-union}
Let $f:\R^d \longrightarrow \R$ be a continuous function which
is twice differentiable with the partial derivatives of second
order continuous on $\R^d\setminus\{0\}$. If there are open
convex subsets $A_1,\ldots,A_m$ such that
$\bigcup\nolimits_{i=1}^m A_i$ is dense in $\R^d$ and
$f|_{A_i}$ is convex for $i=1,\ldots,m$, then $f$ is convex on
$\R^d$.
\end{lemma}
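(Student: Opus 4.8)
The plan is to reduce the statement to the pointwise fact that the Hessian $H_f$ of $f$ is positive semi-definite at every point of $\R^d\setminus\{0\}$, and then to propagate the convexity inequality from segments that avoid the origin to all segments, using nothing more than the continuity of $f$.

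For the pointwise fact, fix $i\in\{1,\dots,m\}$ and a point $z_0\in A_i\setminus\{0\}$. Since $A_i$ is open and $z_0\neq0$, there is an open ball $B$ with $z_0\in B\subseteq A_i\setminus\{0\}$, and on $B$ the function $f$ is twice continuously differentiable and convex, being the restriction of the convex function $f|_{A_i}$ to the convex set $B$. By the criterion recalled above (the Hessian of a twice differentiable function on an open convex set is positive semi-definite if and only if the function is convex), $H_f(z_0)$ is positive semi-definite. As $z_0$ and $i$ vary, $H_f$ is positive semi-definite on $\bigcup_{i=1}^m(A_i\setminus\{0\})=\bigl(\bigcup_{i=1}^m A_i\bigr)\setminus\{0\}$, which is dense in $\R^d$; since $H_f$ is continuous on $\R^d\setminus\{0\}$ and positive semi-definiteness is a closed condition, $H_f$ is positive semi-definite on the whole of $\R^d\setminus\{0\}$.

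Now fix $x,y\in\R^d$ with $0\notin[x,y]$ and set $\varphi(t)=f\bigl((1-t)x+ty\bigr)$ for $t$ in an open interval containing $[0,1]$; since $(1-t)x+ty\neq0$ there, $\varphi$ is twice continuously differentiable with $\varphi''(t)=\bigl\langle H_f\bigl((1-t)x+ty\bigr)(y-x),\,y-x\bigr\rangle\geq0$, so $\varphi$ is convex on $[0,1]$ and $f\bigl((1-t)x+ty\bigr)\leq(1-t)f(x)+tf(y)$ for all $t\in[0,1]$. The pairs $(x,y)$ with $0\in[x,y]$ form the set $\bigl(\{0\}\times\R^d\bigr)\cup\bigl(\R^d\times\{0\}\bigr)\cup\{(x,-\mu x):x\in\R^d\setminus\{0\},\ \mu>0\}$, a finite union of smooth manifolds of dimension at most $d+1$ and hence nowhere dense in $\R^d\times\R^d$ once $d\geq2$ (the relevant case here is $d=2$). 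Consequently the pairs with $0\notin[x,y]$ are dense; and for each fixed $t\in[0,1]$ the map $(x,y)\mapsto(1-t)f(x)+tf(y)-f\bigl((1-t)x+ty\bigr)$ is continuous and nonnegative on this dense set, hence nonnegative everywhere. Therefore $f$ is convex on $\R^d$.

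The one genuinely delicate point — and the step I expect to require care — is the exceptional point $0$. It is tempting to argue that $H_f$ is positive semi-definite on the open set $\R^d\setminus\{0\}$, hence $f$ is ``locally convex'' there, hence convex; but $\R^d\setminus\{0\}$ is not convex, and in dimension one this reasoning already fails, as $f(x)=-|x|$ with $A_1=(-\infty,0)$ and $A_2=(0,\infty)$ satisfies all hypotheses while being concave. The density argument above is exactly what bypasses this: for $d\geq2$ every segment through the origin is a limit of segments avoiding it, so the convexity inequality, being a closed condition that holds off a nowhere dense set, holds everywhere by continuity of $f$.
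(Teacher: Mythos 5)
Your proof is correct and follows essentially the same route as the paper's: positive semi-definiteness of the Hessian on $\R^d\setminus\{0\}$ by density and continuity of the second derivatives, convexity along segments avoiding the origin (the paper encloses such a segment in a halfspace, you compute $\varphi''$ directly along it), and a continuity/density argument for segments through the origin. Your explicit remark that the statement as written fails for $d=1$ (e.g.\ $f(x)=-|x|$ with $A_1=(-\infty,0)$, $A_2=(0,\infty)$) is a genuine point that the paper's terse final sentence glosses over; the lemma is only invoked for $d=2$, where your density argument correctly completes that step.
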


\begin{proof}
Since $f|_{A_i}$ is convex, the Hessian matrix of $f$ is
semi-definite positive on $A_i$. Since $\bigcup\nolimits_{i=1}^m
A_i$ is dense in $\R^n$ and the partial derivatives of second
order of $f$ are continuous, we get that the Hessian matrix of $f$
is semi-definite positive on $\R^n\setminus\{0\}$. Now, for fixed
$x,y\in \R^d$ such that the segment $[x,y]$ does not contain $0$,
there is an open halfspace $S$ such that $0\notin S$ and
$[x,y]\subset S$. Since the Hessian matrix of $f$ is semi-definite
positive on $S$, we get that $f$ is convex on $S$ and so on
$[x,y]$. The remainder case in which $0\in [x,y]$ reduces to the
above one by the continuity of $f$.
\end{proof}

We finish the list of preliminary results with an obvious lemma
on convex real functions.

\begin{lemma}\label{lemma:apendix-2}
Let $I\subset \R$ be an interval, let
$\gamma,\gamma_0,\gamma_1:I\longrightarrow \R$ be twice
differentiable positive functions, and let
$\varphi=\log(\gamma)$, $\varphi_i=\log(\gamma_i)$ for $i=0,1$.
\begin{enumerate}
\item[(a)] $\gamma$ is convex if and only if $\varphi'' +
    [\varphi']^2\geq 0$. In particular, if $\varphi''\geq
    0$, then $\gamma$ is convex.
\item[(b)] If $\varphi_0''$ and $\varphi_1''$ are
    nonnegative, then for each $\theta\in [0,1]$ the
    function
$$
\gamma_\theta(t)=[\gamma_1(t)]^{\theta}[\gamma_0(t)]^{1-\theta} \qquad \bigl(t\in I\bigr)
$$
is convex.
\end{enumerate}
\end{lemma}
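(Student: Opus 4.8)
The plan is to reduce everything to the elementary fact that a twice differentiable function on an interval is convex if and only if its second derivative is nonnegative, and then to differentiate $e^{\varphi}$ by the chain rule.

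For part (a), I would write $\gamma = e^{\varphi}$ and compute $\gamma' = \varphi'\,e^{\varphi}$ and hence $\gamma'' = \bigl(\varphi'' + (\varphi')^2\bigr)e^{\varphi}$. Since $e^{\varphi} > 0$ everywhere on $I$, the sign of $\gamma''$ agrees pointwise with the sign of $\varphi'' + (\varphi')^2$; as $\gamma$ is twice differentiable on the interval $I$, it is convex precisely when $\gamma'' \geq 0$, which is exactly the stated equivalence. The ``in particular'' clause is then immediate: if $\varphi'' \geq 0$, then $\varphi'' + (\varphi')^2 \geq \varphi'' \geq 0$ since $(\varphi')^2 \geq 0$, so $\gamma$ is convex.

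For part (b), I would observe that $\log \gamma_\theta = \theta\varphi_1 + (1-\theta)\varphi_0 =: \psi$, so that $\gamma_\theta = e^{\psi}$ with $\psi$ twice differentiable and positive functions underneath making $\gamma_\theta$ positive. Then $\psi'' = \theta\varphi_1'' + (1-\theta)\varphi_0''$, which is a nonnegative combination (as $\theta \in [0,1]$) of the nonnegative functions $\varphi_1''$ and $\varphi_0''$, hence $\psi'' \geq 0$. Applying the ``in particular'' part of (a) with $\gamma$ replaced by $\gamma_\theta$ and $\varphi$ replaced by $\psi$ yields that $\gamma_\theta$ is convex.

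There is no real obstacle here: the entire lemma is a one-line chain-rule computation plus the standard second-derivative criterion for convexity, and part (b) is just the observation that a convex combination of the $\varphi_i''$ stays nonnegative so that part (a) applies. The only point worth stating carefully is that all functions involved are genuinely twice differentiable and positive on $I$, so both the logarithm and the second-derivative test are legitimate.
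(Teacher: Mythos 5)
Your proof is correct and follows essentially the same route as the paper: the paper computes $\varphi''+[\varphi']^2=\gamma''/\gamma$ by differentiating $\varphi=\log\gamma$, while you equivalently differentiate $\gamma=\e^{\varphi}$ to get $\gamma''=(\varphi''+[\varphi']^2)\e^{\varphi}$, and part (b) is handled identically in both via $\varphi_\theta''=\theta\varphi_1''+(1-\theta)\varphi_0''$. No gaps.
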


\begin{proof}
(a). We have clearly that
$$
 \varphi'=\frac{\gamma'}{\gamma} \qquad \text{and} \qquad
 \varphi''=\frac{\gamma''\,\gamma - [\gamma']^2}{\gamma^2},
 \qquad \text{so} \qquad \varphi'' + [\varphi']^2 = \frac{\gamma''\,\gamma}{\gamma^2}\,.
$$
Now, $\gamma$ is convex if and only if $\gamma''\geq 0$ and,
since $\gamma$ is positive, this is equivalent to $\varphi''+
[\varphi']^2\geq 0$.

(b). Writing $\varphi_\theta=\log(\gamma_\theta)$, we have that
$$
\varphi_\theta''=\theta \varphi_1'' + (1-\theta)\varphi_0''
$$
and the result follows from (a).
\end{proof}

We are now ready to state the convexity of the norms of
Examples \ref{ex:raro-dim3} and \ref{ex:dim5}.

\begin{prop}\label{prop:apendix-normas-p}
For every $p_0,p_1\in [2,+\infty[$ and every $\theta\in [0,1]$,
the function
$$
f_\theta(x,y)=\|(x,y)\|_{p_1}^\theta\,\|(x,y)\|_{p_0}^{1-\theta} \qquad \bigl(x,y\in \R\bigr)
$$
is a norm on $\R^2$.
\end{prop}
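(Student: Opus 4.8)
The plan is to verify the norm axioms for $f_\theta$. Nonnegativity, the fact that $f_\theta$ vanishes only at $0$, continuity, positive homogeneity of degree $1$, and the identity $f_\theta(-x,-y)=f_\theta(|x|,|y|)=f_\theta(x,y)$ are all inherited at once from the corresponding properties of $\|\cdot\|_{p_0}$ and $\|\cdot\|_{p_1}$. By positive homogeneity the proof therefore reduces to showing that $f_\theta$ is convex on $\R^2$, since then $f_\theta(u+v)=2f_\theta\bigl(\tfrac12(u+v)\bigr)\leq f_\theta(u)+f_\theta(v)$. Convexity will be obtained from Lemma~\ref{lemma:apendix-union}, applied to the eight open cones obtained from $A:=\{(x,y):0<y<x\}$ by the symmetries $(x,y)\mapsto(\pm x,\pm y)$ and $(x,y)\mapsto(\pm y,\pm x)$, which leave $f_\theta$ invariant because $\|\cdot\|_{p_0}$ and $\|\cdot\|_{p_1}$ are absolute and symmetric: these are the eight octants cut out by the lines $x=0$, $y=0$, $y=x$, $y=-x$, their union is $\R^2$ minus four lines hence dense, and convexity on $A$ transports to each image by the corresponding linear, $f_\theta$-preserving change of variables. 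Lemma~\ref{lemma:apendix-union} also needs $f_\theta$ to be $C^2$ on $\R^2\setminus\{0\}$; this holds because for $p\geq2$ the map $u\mapsto|u|^p$ is $C^2$ on $\R$ (second derivative $p(p-1)|u|^{p-2}$), so $(x,y)\mapsto|x|^p+|y|^p$ is $C^2$ on $\R^2$ and positive off $0$, whence $\|\cdot\|_p=(|x|^p+|y|^p)^{1/p}$, and therefore also $f_\theta=\|\cdot\|_{p_1}^\theta\|\cdot\|_{p_0}^{1-\theta}$, is $C^2$ on $\R^2\setminus\{0\}$.

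So everything comes down to proving that $f_\theta$ is convex on $A$, and for this I would apply Lemma~\ref{lemma:apendix-1} with $X=\ell_\infty^2$ and the closed cone $C=\overline{A}=\{(x,y):0\leq y\leq x\}$, on which $f_\theta$ is positive homogeneous. Here $C\cap S_{\ell_\infty^2}=\{(1,t):0\leq t\leq1\}$, and for $u=(1,t_1)$, $v=(1,t_2)$ in this set and $\lambda\in[0,1]$ one has $\lambda u+(1-\lambda)v=(1,\lambda t_1+(1-\lambda)t_2)\in C$, so the hypothesis of Lemma~\ref{lemma:apendix-1} is exactly that the one-variable function
$$
h(t)=f_\theta(1,t)=\bigl(1+t^{p_1}\bigr)^{\theta/p_1}\bigl(1+t^{p_0}\bigr)^{(1-\theta)/p_0}=\gamma_1(t)^{\theta}\,\gamma_0(t)^{1-\theta},
\qquad \gamma_i(t):=\bigl(1+t^{p_i}\bigr)^{1/p_i},
$$
is convex on $[0,1]$. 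Granting this, Lemma~\ref{lemma:apendix-1} gives that $f_\theta$ is convex on $\overline{A}$, hence on $A$.

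It remains to see that $h$ is convex on $[0,1]$; it suffices to verify this on $(0,1)$ and then invoke continuity of $h$. On $(0,1)$ the functions $\gamma_0,\gamma_1$ are smooth and positive, so Lemma~\ref{lemma:apendix-2}(b) applies once we check that $\varphi_i:=\log\gamma_i=\tfrac1{p_i}\log(1+t^{p_i})$ satisfies $\varphi_i''\geq0$. A direct computation gives
$$
\varphi_i''(t)=\frac{t^{\,p_i-2}\bigl((p_i-1)-t^{p_i}\bigr)}{(1+t^{p_i})^{2}},
$$
which is nonnegative for $t\in(0,1)$ precisely because $p_i\geq2$ forces $(p_i-1)-t^{p_i}>(p_i-1)-1\geq0$. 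Thus $\varphi_0''$ and $\varphi_1''$ are nonnegative on $(0,1)$, and Lemma~\ref{lemma:apendix-2}(b) yields that $h=\gamma_1^{\theta}\gamma_0^{1-\theta}$ is convex on $(0,1)$, which completes the chain.

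I expect the main obstacle to be the choice of the cone $A$ — equivalently, the choice of $\ell_\infty^2$ in Lemma~\ref{lemma:apendix-1}. Working instead over a half-plane such as $\{x>0\}$, or parametrizing along the $\ell_1$-sphere, forces one to ask for convexity of $t\mapsto\log\|(1,t)\|_{p_i}$ on an unbounded range, which is false: that function is convex only where $t^{p_i}\leq p_i-1$ and is concave beyond it, so Lemma~\ref{lemma:apendix-2}(b) cannot be invoked there. Restricting to $0<y<x$ keeps the parameter $t=y/x$ inside $(0,1)\subseteq\bigl(0,(p_i-1)^{1/p_i}\bigr]$, which is exactly the range in which the log-convexity needed for Lemma~\ref{lemma:apendix-2}(b) holds when $p_i\geq2$; this is where the hypothesis $p_0,p_1\geq2$ is essential. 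Verifying that the eight symmetry-images of $A$ cover a dense subset of $\R^2$ is then routine.
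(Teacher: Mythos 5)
Your proposal is correct and follows essentially the same route as the paper: reduce to convexity via Lemma~\ref{lemma:apendix-union} on the eight octants, apply Lemma~\ref{lemma:apendix-1} over $\ell_\infty^2$ on one octant, and verify log-convexity of $t\mapsto\|(t,1)\|_{p_i}$ on $[0,1]$ by the same computation of $\varphi_i''$ combined with Lemma~\ref{lemma:apendix-2}(b). Your added remarks on the $C^2$ regularity and on why the octant (rather than half-plane) decomposition is forced are correct elaborations of points the paper leaves implicit.
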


\begin{proof}
Let us define $\varphi(t)=\log(f_\theta(t,1))$ and
$\varphi_i(t)=\log \|(t,1)\|_{p_i}$ for $i=0,1$ and every
$t\in[0,1]$, and observe that
$$
\varphi_i'(t)=\frac{t^{p_i-1}}{1 + t^{p_i}} \qquad \text{and} \qquad
\varphi_i''(t)=\frac{t^{p_i-2}(p_i - 1 - t^{p_i}) }{(1+t^{p_i})^2}
\qquad \bigl(t\in[0,1],\ i=0,1\bigr).
$$
If $p_i\geq 2$, then $\varphi_i''\geq 0$ for $i=0,1$ and Lemma~\ref{lemma:apendix-2} gives us that
the function $t\longmapsto f_\theta(t,1)$ for $t\in[0,1]$ is convex. Using
Lemma~\ref{lemma:apendix-1} for $(\R^2, \|\cdot\|_{\infty})$ we have that  $f$ is convex on the
cone
$$
\{(x,y)\in \R^2\,:\, x\geq 0,\ y\geq 0,\ x\leq y\}.
$$
Since the function $f_\theta$ is absolute and symmetric, the same argument is valid in any of the
other seven cones wherein we can divide $\R^2$. Now, since $f_\theta$ is twice differentiable with
partial derivatives of second order continuous on $\R^2\setminus \{(0,0)\}$,
Lemma~\ref{lemma:apendix-union} gives us that it is convex on $\R^2$. Finally, since $f_\theta$ is
positive homogeneous and it is zero only at zero, it is a norm on $\R^2$.
\end{proof}

\begin{prop}\label{prop:apendix-no-simetrica}
For any $0<a <1$, the function $\|\cdot\|_a$ defined by
$$
\|(x,y)\|_a=\left(x^2+\left(\tfrac{a}{1+a}\right)^{1+a}y^2\right)^\frac{-a}{2}
\left(x^2+\left(\tfrac{a}{1+a}\right)^{a}y^2\right)^\frac{1+a}{2} \qquad
\big((x,y)\in\R^2\setminus\{(0,0)\}\big)
$$
and $\|(0,0)\|_a=0$, is a norm on $\R^2$.
\end{prop}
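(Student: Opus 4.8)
The plan is to show that $\|\cdot\|_a$ is convex on $\R^2$ and then invoke the fact that a nonnegative, positively homogeneous function which is convex and vanishes only at the origin is a norm. Throughout write $c_0 = \left(\tfrac{a}{1+a}\right)^{a}$ and $c_1 = \left(\tfrac{a}{1+a}\right)^{1+a}$, so that $c_0,c_1>0$, $c_1 = \tfrac{a}{1+a}\,c_0$, and hence
$$
(1+a)\,c_1 = a\,c_0\,;
$$
this identity (beyond the positivity of the constants) is the only special feature I shall use, and it is precisely what makes the formula a norm. Note that $\|(x,y)\|_a = (x^2 + c_1 y^2)^{-a/2}(x^2+c_0 y^2)^{(1+a)/2}$ is positively homogeneous of degree $1$, even in each variable, strictly positive off the origin, continuous on $\R^2$, and $C^{\infty}$ on $\R^2\setminus\{(0,0)\}$.

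The core of the argument is to prove that the one-variable function $h(s) := \|(s,1)\|_a = (s^2+c_1)^{-a/2}(s^2+c_0)^{(1+a)/2}$ is convex on all of $\R$. Setting $\varphi = \log h$ one computes
$$
\varphi'(s) = -\frac{a\,s}{s^2+c_1} + \frac{(1+a)\,s}{s^2+c_0} = \frac{s\bigl[(1+a)(s^2+c_1) - a(s^2+c_0)\bigr]}{(s^2+c_0)(s^2+c_1)} = \frac{s^3}{(s^2+c_0)(s^2+c_1)}\,,
$$
where the last equality uses exactly $(1+a)c_1 = a c_0$ to annihilate the $s^2$-free part of the bracket. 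Writing $D(s) = (s^2+c_0)(s^2+c_1)$, a routine differentiation then gives
$$
\varphi''(s) + \varphi'(s)^2 = \frac{s^2\bigl[(c_0+c_1)s^2 + 3c_0 c_1\bigr]}{D(s)^2} \geq 0 \qquad (s\in\R),
$$
so Lemma~\ref{lemma:apendix-2}(a) shows that $h$ is convex on $\R$.

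Next I would transfer this to $\|\cdot\|_a$ on $\R^2$. For $(x,y)$ with $y>0$, homogeneity gives $\|(x,y)\|_a = y\,h(x/y)$, which is the perspective transform of the convex function $h$ and is therefore convex on the open half-plane $\{y>0\}$; since $(x,y)\mapsto(x,-y)$ is a linear bijection leaving $\|\cdot\|_a$ invariant, $\|\cdot\|_a$ is also convex on $\{y<0\}$. (Alternatively, and staying entirely within the paper's machinery, one checks in the same manner that $s\mapsto\|(1,s)\|_a$ is convex on $[0,1]$ and then applies Lemma~\ref{lemma:apendix-1} with $(\R^2,\|\cdot\|_\infty)$ on each of the eight cones into which the coordinate axes and the two diagonals divide $\R^2$, exactly as in the proof of Proposition~\ref{prop:apendix-normas-p}.) In either case the two open half-planes are convex sets whose union $\R^2\setminus\{(x,y):y=0\}$ is dense in $\R^2$, and $\|\cdot\|_a$ is continuous on $\R^2$ with continuous second-order partial derivatives on $\R^2\setminus\{(0,0)\}$; hence Lemma~\ref{lemma:apendix-union} gives that $\|\cdot\|_a$ is convex on all of $\R^2$. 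Finally, convexity together with positive homogeneity yields the triangle inequality, $\|u+v\|_a = 2\,\bigl\|\tfrac{u+v}{2}\bigr\|_a \leq \|u\|_a + \|v\|_a$, while homogeneity and evenness give $\|\lambda v\|_a = |\lambda|\,\|v\|_a$; since $\|v\|_a>0$ for $v\neq0$, $\|\cdot\|_a$ is a norm. The only genuine computation is the verification that $\varphi''+(\varphi')^2\geq0$, and I expect the sole difficulty there to be bookkeeping: once the identity $(1+a)c_1 = ac_0$ collapses $\varphi'$ to $s^3/D$, the numerator of $\varphi''+(\varphi')^2$ turns out to be the manifestly nonnegative polynomial $s^2\bigl[(c_0+c_1)s^2+3c_0c_1\bigr]$.
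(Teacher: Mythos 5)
Your proposal is correct and follows essentially the same route as the paper: the identical computation showing $\varphi''+(\varphi')^2 = s^2\bigl[(c_0+c_1)s^2+3c_0c_1\bigr]/D(s)^2\geq 0$ (hence convexity of $s\mapsto\|(s,1)\|_a$ via Lemma~\ref{lemma:apendix-2}), followed by convexity on each open half-plane and gluing via Lemma~\ref{lemma:apendix-union}. The only (harmless) difference is in transferring one-variable convexity to the half-plane: you use the perspective-transform fact $\|(x,y)\|_a=y\,h(x/y)$, whereas the paper applies Lemma~\ref{lemma:apendix-1} to the norms $|(x,y)|_\eps=\max\{\eps|x|,|y|\}$ and lets $\eps\to0$.
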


\begin{proof}
First of all, $\|\cdot\|_a$ is positive homogeneous, it is
obviously continuous on $\R^2\setminus\{(0,0)\}$ and it is also
continuous at $(0,0)$ by homogeneity. We consider the function
$$
\varphi(t)=\log\bigl(\|(t,1)\|_a\bigr) \qquad (t\in \R)
$$
and observe that
\begin{align*}
\varphi'(t)&=\frac{t^3}{\left(\left(\tfrac{a}{1+a}\right)^a
+ t^2\right)\left(\left(\tfrac{a}{1+a}\right)^{1+a} +
t^2\right)} \qquad (t\in\R )\\
\varphi''(t)&=\frac{3t^2\left(\tfrac{a}{1+a}\right)^{1+2a}+
t^4\left(\tfrac{a}{1+a}\right)^a+
t^4\left(\tfrac{a}{1+a}\right)^{1+a}-t^6}
{\left(\left(\tfrac{a}{1+a}\right)^a +
t^2\right)^2\left(\left(\tfrac{a}{1+a}\right)^{1+a} +
t^2\right)^2} \qquad (t\in\R )
\end{align*}
so we obviously obtain that
$$
\varphi''(t) +
\bigl(\varphi'(t)\bigr)^2=\frac{3t^2\left(\tfrac{a}{1+a}\right)^{1+2a}+
t^4\left(\tfrac{a}{1+a}\right)^a+
t^4\left(\tfrac{a}{1+a}\right)^{1+a}}
{\left(\left(\tfrac{a}{1+a}\right)^a +
t^2\right)^2\left(\left(\tfrac{a}{1+a}\right)^{1+a} +
t^2\right)^2} \qquad (t\in\R )
$$
Therefore, Lemma~\ref{lemma:apendix-2} gives us that the
function $t\longmapsto \|(t,1)\|_a$ for $t\in\R$ is convex and
using now Lemma~\ref{lemma:apendix-1} for $(\R^2,
|\cdot|_\eps)$ where $|(x,y)|_\eps=\max\{\eps|x|,|y|\}$, and
taking $\eps\rightarrow 0$, this implies that $\|\cdot\|_a$ is
convex on the upper halfplane. Repeating the argument by
interchanging $1$ by $-1$, we get that $\|\cdot\|_a$ is also
convex on the lower halfplane. Now,
Lemma~\ref{lemma:apendix-union} gives us that it is convex on
$\R^2$ and the homogeneity shows that $\|\cdot\|_a$ is a norm
on $\R^2$.
\end{proof}

One may wonder whether Proposition~\ref{prop:apendix-normas-p}
is true for every pair of norms on $\R^2$. The following
example shows that this is not the case even when working with
$C^\infty$ norms.

\begin{example}
{\slshape For every $\theta\in]0,1[$, there is $\eps>0$ such
that the positive homogeneous function
$$
n(x,y)=(x^2 + \eps\,y^2)^\frac{\theta}{2}\ (\eps x^2 + y^2)^\frac{1-\theta}{2}
$$
is not a norm.\ } Indeed, just observe that
$$
n(1,0)=\eps^\frac{1-\theta}{2}, \quad n(0,1)=\eps^\frac{\theta}{2} \qquad
\text{and} \qquad n(1,1)=(1+\eps)^\frac12.
$$
\end{example}

\vspace{0.5cm}

\noindent\textbf{Acknowledgement.\ } We would like to thank
Vladimir Kadets for providing us with some crucial ideas about
the content of section~\ref{sec:appendix} and Rafael Pay\'{a} for
fruitful conversations on the subject of the paper. We also
thank the anonymous referee for multiple stylistical
improvements.

\end{document}